\documentclass{amsart}



\usepackage{amssymb,amsthm,amsmath}

\usepackage{hyperref}

\newtheorem{teo}{Theorem}
\newtheorem{lem}{Lemma}
\newtheorem{cor}[teo]{Corollary}
\theoremstyle{remark}
\newtheorem{rmk}{Remark}[section]

\newcommand{\Z}{\mathbb{Z}}


\begin{document}

\title{On Egyptian Fractions}


\author[M. Bello-Hern\'andez]{Manuel Bello Hern\'andez
\address{Dpto. de Matem\'aticas y Computaci\'on,
Universidad de La Rioja,
Edif. J. L. Vives, Calle Luis de Ulloa s/n,
26004 Logro\~no, Spain}
\email{mbello@unirioja.es}}
\thanks{This research was supported in
part from `Ministerio de Ciencia y Tecnolog\'{\i}a', Project
MTM2009-14668-C02-02}

\author[M. Benito]{Manuel Benito
\address{Instituto Sagasta, Glorieta del Doctor Zub{\'\i}a, s/n,
26003, Logro{\~n}o, La Rioja, Spain}
\email{mbenit8@palmera.pntic.mec.com}}

\author[E. Fern\'andez]{Emilio Fern\'andez
\address{Instituto Sagasta, Glorieta del Doctor Zub{\'\i}a, s/n,
26003, Logro{\~n}o, La Rioja, Spain}
\email{e.fernandez@iessagasta.com}}

\subjclass[2000]{Primary 11D68; Secondary 11A07,11N32}

\keywords{Egyptian fraction, Erd\H{o}s-Straus Conjecture}



\begin{abstract}We find a polynomial in three variables whose values at nonnegative integers satisfy the Erd\H{o}s-Straus Conjecture. Although the perfect squares are not covered by these values, it allows us to prove that there are arbitrarily long sequence of consecutive numbers satisfying the Erd\H{o}s-Straus Conjecture.  We conjecture that the values of this polynomial include all the prime numbers of the form $4q+5$, which is checked up to $10^{14}$. A greedy-type algorithm to find an Erd\H{o}s-Straus decomposition is also given; the convergence of this algorithm is proved for a wide class of numbers. Combining this algorithm with the mentioned polynomial we verify that all the natural numbers  $n$, $2\le n\le 2\times 10^{14}$, satisfy the Ed\H{o}s-Straus Conjecture.
\end{abstract}

\maketitle


\section{Introduction}

One of the most famous conjecture on Egyptian fractions is the  Erd\H{o}s-Straus Conjecture (ESC): {\it Given a positive integer $n\ge 2$ there exist positive integers $(x,y,z)\in\Z_{>0}^3$ such that
\begin{equation}\label{Erd-Str-Conj}
\frac4n=\frac1x+\frac1y+\frac1z.
\end{equation}
}In this case we say that $n$ is an Erd\H{o}s-Straus' number and refer to (\ref{Erd-Str-Conj}) as an Erd\H{o}s-Straus decomposition of $\frac4n$.  Sierpi\'nski and Schinzel extent to a more general conjeture replacing 4 in (\ref{Erd-Str-Conj}) by other fixed positive integer $m\ge 4$. The roots of these lie in the minimum number of Egyptian fractions needed to decompose a fraction as sum of Egyptian fractions (see, for example, \cite{Els-Tao}, \cite{Erd}, \cite{Mar}, \cite{Mor}, \cite{Sie} and \cite{Vau}). ESC has been verified for all integer up to a bound by many authors: Straus, Bernstein \cite{Ber}, Yamamoto \cite{Yam}, Swett \cite{Swe}, etc. Swett has checked ESC for all $n\le10^{14}$. If $n$ is an Erd\"{o}s-Straus' number, then it also holds true for integers which are divisible by $n$; moreover, $\frac4{4q+3}=\frac1{q+1}+\frac1{(q+1)(4q+3)}$ and
if we have the factorization $n=a\cdot b\cdot c$,
$\frac1n=\frac1{a(a+b)c}+\frac1{b(a+b)c}.$
Therefore, ESC should be only checked for primes $n=4q+1$.

Because
\begin{equation}\label{eqTipoI}
\frac4{abc-a-b}=\frac{1}{a\frac{bc-1}4}+\frac{1}{a(ac-1)\frac{bc-1}4}+\frac{1}{(ac-1)\frac{bc-1}4n},
\end{equation}
when $bc\equiv 1\, \textnormal{(mod 4)}$, the numbers $n=abc-a-b$ are Erd\H{o}s-Straus' numbers. Moreover, if we replace the condition $bc\equiv 1\, \textnormal{(mod 4)}$ by $bc\equiv 1$ (mod $m$), then the Sierpi\'nski and Schinzel conjecture holds for $n=abc-a-b$. Since we are interested in $n=4q+1$, let $n=p(\alpha,\beta,\gamma)$ be  the polynomial $p:\Z_{\ge0}^3\to\Z$  given by
\begin{equation}\label{pol}
p(\alpha,\beta,\gamma)=(\alpha+1)(4\beta+3)(4\gamma+3)-(\alpha+1)-(4\beta+3).
\end{equation}

This parametric solution of ESC let us to prove the following result:
\begin{teo}\label{teoConsESN}There are arbitrarily long sequence of consecutive numbers satisfying Erd\H{o}s-Straus Conjecture.
\end{teo}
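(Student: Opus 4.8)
The plan is to exploit the explicit identities collected in the introduction, which already place most residue classes modulo $4$ inside the set of Erd\H{o}s-Straus' numbers; this leaves, in any block of consecutive integers, only the entries $\equiv 1\pmod{4}$ to be controlled, and these I would pin down with the Chinese Remainder Theorem.

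First I would dispose of the trivial classes. Every even $n=2m$ is an Erd\H{o}s-Straus' number, since $\frac{4}{2m}=\frac1m+\frac1{2m}+\frac1{2m}$; every $n=4q+3$ is one by the identity $\frac{4}{4q+3}=\frac{1}{q+1}+\frac{1}{(q+1)(4q+3)}$ of the introduction (split one summand to obtain three unit fractions; this decomposition can also be read off from \eqref{eqTipoI} with $a=b=1$); and, as recalled there, any multiple of an Erd\H{o}s-Straus' number is again one. Hence in an interval $[x+1,x+L]$ the only possibly troublesome entries are those $\equiv 1\pmod{4}$, and to neutralize such an entry it is enough to make it divisible by some prime $q\equiv 3\pmod{4}$ --- every such prime being itself an Erd\H{o}s-Straus' number --- or, keeping the polynomial $p$ of \eqref{pol} in the foreground, to make it one of the values of $p$.

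Given a target length $L$, the second step is a congruence construction for the left endpoint $x$. Normalizing $x\equiv 0\pmod{4}$, the troublesome entries of $[x+1,x+L]$ sit at the positions $j\in\{1,5,9,\dots\}\cap[1,L]$, say $j_{1}<\cdots<j_{r}$ with $r\le\lceil L/4\rceil$. I would choose pairwise distinct primes $q_{1},\dots,q_{r}$, all $\equiv 3\pmod{4}$ and larger than $L$ (there are infinitely many), and solve the simultaneous system $x\equiv 0\pmod{4}$, $x\equiv -j_{i}\pmod{q_{i}}$ $(1\le i\le r)$; the moduli are pairwise coprime, so the solution set is a nonempty arithmetic progression, from which $x$ can be taken as large as we please. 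Then each $x+j_{i}$ is a positive multiple of $q_{i}$, hence an Erd\H{o}s-Straus' number, while every other entry of the block is even or $\equiv 3\pmod{4}$, hence also one; as $L$ was arbitrary, the theorem follows. A variant genuinely driven by $p$ runs identically: replace the $q_{i}$ by distinct primes $\ell_{i}>L$ with $\ell_{i}\equiv 2\pmod{3}$ and impose $\ell_{i}\mid 3q+1$, where $x+j_{i}=4q+1$; a short computation with \eqref{pol} shows that $n=4q+1$ is a value $p(\alpha,\beta,0)$ as soon as $3q+1$ has a prime factor $\equiv 2\pmod{3}$, so the same Chinese Remainder step exhibits each troublesome entry as a value of the polynomial.

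The statement is ``soft'', so I do not expect a genuine obstacle; the points deserving care are only (i) that the moduli handed to CRT are really pairwise coprime --- true because the $q_{i}$ (resp.\ $\ell_{i}$) are distinct odd primes and the remaining modulus is $4$ --- and (ii) that, once the auxiliary primes are fixed, $x$ can still be chosen large enough that each $x+j\ge 2$ and, in the polynomial variant, that the leftover parameter (the multiplier $\alpha+1$) comes out a positive integer; both are arranged by fixing the primes before enlarging $x$. I would also note that the argument says nothing effective about \emph{where} a run of length $L$ first appears.
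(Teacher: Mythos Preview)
Your argument is correct, and the reduction step (dispose of even $n$ and $n\equiv 3\pmod 4$, then use CRT on the remaining positions) is the same as the paper's. The execution of the CRT step, however, is genuinely different. You pick pairwise distinct primes $q_i\equiv 3\pmod 4$ larger than $L$ and force $q_i\mid x+j_i$, invoking only the fact that multiples of Erd\H{o}s--Straus numbers are Erd\H{o}s--Straus; the polynomial $p$ is not needed (your variant with primes $\ell_i\equiv 2\pmod 3$ is a correct optional add-on). The paper instead stays entirely inside the polynomial framework: it takes the moduli $4\beta+3$ for $\beta=0,1,\dots,N-1$ --- generally not coprime --- checks the compatibility condition $(4\beta_1+3,4\beta_2+3)\mid (3\beta_1+2)-(3\beta_2+2)$, and applies CRT to produce a single $T$ with $T\equiv 3\beta+2\pmod{4\beta+3}$ for every $\beta$ in the range, so that by Lemma~\ref{Lem_pol_q} the $N$ \emph{consecutive residue classes} $q\equiv -(\beta+2)\pmod T$ all lie in $\mathcal N_1$.

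What each approach buys: yours is strictly more elementary and would work in any setting where one already knows an infinite supply of Erd\H{o}s--Straus primes. The paper's route yields more: not a single long block, but an entire arithmetic progression of long blocks (every lift of those $N$ residue classes modulo $T$), and every $\frac4n$ in those blocks receives specifically a Type~I decomposition, which is the point the paper wants to stress. Your closing caveat about effectiveness applies equally to both proofs.
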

We have checked that the set
\[
\mathcal{N}_1=\left\{n\in\Z_{>0}:\exists (\alpha,\beta,\gamma)\in \mathbb{Z}_{\ge0}^3,\, n=p(\alpha,\beta,\gamma)\right\}
\]
contains all the prime numbers of the form $n=4q+5,\, q\in\Z_{\ge0},$ $n<10^{14}$.  We prove, see  Lemma \ref{lemNoSquares} below, that $\mathcal{N}_1$ does not contain the  perfect squares.

In Section \ref{sectionAuxiliaryRelations} we list several relations for ESC. One interesting conclusion in this section is given in Lemma \ref{lemNoResClas}: it is impossible to generate ``naturally'' a finite  number of congruence classes satisfying ESC which contain all the prime.  Section \ref{sectionConsecutiveNumbers} contains a proof of Theorem \ref{teoConsESN}. In Section \ref{SectionAditional} we prove that the set $\mathcal{N}_1$ does not contain the perfect squares and several other additional remarks on ESC. We describe a fast algorithm to construct a decomposition of $\frac4n$ as sum of three Egyptian fractions and we prove the convergence of that algorithm for a large class of numbers in the last Section \ref{SectionAlgorithm}.


\section{Auxiliary relations}\label{sectionAuxiliaryRelations}


The Erd\H{o}s-Straus Conjecture shows an arithmetic property of the natural numbers (see Lemma \ref{LemConjQ} below). We can split any fraction $\frac2n$ as sum of two Egyptian fraction but the same does not hold for $\frac3n$ for all $n\in\Z_{>0}$. It is very well known that the equation
$\frac{3}{n}=\frac1x+\frac1y$ has solutions $(x,y)\in\Z_{>0}^2$ if and only if $n$ has a divisor $m$ with either $m\equiv0$ or $m\equiv 2$ (mod 3); it is equivalent to $n\not\equiv 1$ (mod 6).

The following Lemma is very well known; see, for example \cite{Mor}, p. 287, or \cite{Yam}. We give a proof for an easy reading.


\begin{lem}\label{BasicCharac} Let $n$ be a prime; $n$ is an Erd\H{o}s-Straus' number
if and only if there exists $(a,b,c,d)\in\Z_{>0}^4$ such that some of the following conditions holds:
\begin{align}\label{condNecSufErdosStraus1}
(4a b c-1)d&=(a+b)n,\\ \label{condNecSufErdosStraus2}\quad(4abc-1)d&=a n+b.
\end{align}
\end{lem}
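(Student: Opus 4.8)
The plan is to argue directly from the relation $4xyz=n(xy+yz+zx)$ obtained by clearing denominators in (\ref{Erd-Str-Conj}). For the ``if'' direction I would simply exhibit the two decompositions. When $(4abc-1)d=(a+b)n$, I claim that
\[
\frac4n=\frac{1}{acd}+\frac{1}{bcd}+\frac{1}{abcn};
\]
indeed, over the common denominator $abcdn$ the right-hand side has numerator $(a+b)n+d=(4abc-1)d+d=4abcd$. When instead $(4abc-1)d=an+b$, I claim that
\[
\frac4n=\frac{1}{bcd}+\frac{1}{acdn}+\frac{1}{abcn},
\]
whose numerator over $abcdn$ is $an+b+d=(4abc-1)d+d=4abcd$. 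These are the ``symmetric'' and ``non-symmetric'' analogues of the identity $\frac4{4q+3}=\frac1{q+1}+\frac1{(q+1)(4q+3)}$ and of (\ref{eqTipoI}), and either one exhibits $n$ as an Erd\H{o}s-Straus number.

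For the ``only if'' direction I would start from $\frac4n=\frac1x+\frac1y+\frac1z$ with $x,y,z\in\Z_{>0}$ (and I may assume $n$ odd, since for $n=2$ the quadruple $(1,1,1,1)$ already satisfies (\ref{condNecSufErdosStraus2})). Since $n$ is an odd prime, from $4xyz=n(xy+yz+zx)$ it divides $xyz$, hence one of $x,y,z$; it cannot divide all three, for then $\frac1x,\frac1y,\frac1z\le\frac1n$ would force $\frac4n\le\frac3n$. By the symmetry of the equation I would then treat two cases: (A) $n$ divides exactly one of $x,y,z$, say $z$; (B) $n$ divides exactly two, say $y$ and $z$. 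In case (A), write $z=nz_1$, so that $(4z_1-1)xy=nz_1(x+y)$; since $n\nmid x$ and $n\nmid y$, primality forces $n\mid 4z_1-1$. Put $g=\gcd(x,y)$, $x=gA$, $y=gB$ with $\gcd(A,B)=1$, and $\delta=(4z_1-1)/n$; then $\delta g\,AB=z_1(A+B)$, and peeling off $A+B$ (using $\gcd(AB,A+B)=1$) I would set $C=\delta g/(A+B)$, check $z_1=ABC$, hence $4ABC-1=n\delta$, and finally $d=g/C$, arriving at $(a,b,c,d)=(A,B,C,g/C)$ with $(4abc-1)d=(a+b)n$. In case (B), write $y=ny_1$, $z=nz_1$, so that $x(4y_1z_1-y_1-z_1)=ny_1z_1$; with $h=\gcd(y_1,z_1)$, $y_1=hp$, $z_1=hq$, $\gcd(p,q)=1$, coprimality gives $x=kpq$ with $k=nh/(4hpq-p-q)$, and then $(a,b,c,d)=(h/k,\,p,\,k,\,q)$ satisfies $(4abc-1)d=an+b$.

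The computations in the ``only if'' direction are mechanical once the right quantities are named; the real work -- and the step I expect to be the main obstacle -- is to check, at each stage, that the object just introduced is a \emph{positive integer}: in case (A) that $(A+B)\mid\delta g$, and then, from $n(A+B)C=g(4ABC-1)$ together with $\gcd(C,4ABC-1)=1$, that $C\mid g$; in case (B) that $(4hpq-p-q)\mid nh$, and then, using $\gcd(n,k)=1$ (which is precisely where $n\nmid x$ re-enters), that $k\mid h$. Granting those divisibilities, the relations (\ref{condNecSufErdosStraus1}) and (\ref{condNecSufErdosStraus2}) drop out by a one-line substitution using $4z_1-1=n\delta$, respectively $nh=(4hpq-p-q)k$; so there is no deeper difficulty beyond this bookkeeping.
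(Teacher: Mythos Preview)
Your argument is correct. The ``if'' direction is identical to the paper's (just with the numerator check written out), and your ``only if'' direction reaches the same two relations by a route that is parallel in spirit but organized differently. The paper, after writing one denominator as $x=an$, reduces to the two--unit--fraction equation $\frac{4a-1}{na}=\frac1y+\frac1z$ and then splits on whether $\gcd(4a-1,n)=1$ or $n$, invoking in each case the classical parametrization of solutions to $\frac1N=\frac1Y+\frac1Z$ (namely $N=a_1a_2a_3$, $Y,Z$ proportional to $a_1(a_1+a_2)a_3$ and $a_2(a_1+a_2)a_3$). You instead split on whether $n$ divides exactly one or exactly two of $x,y,z$ and extract $(a,b,c,d)$ directly by gcd--peeling. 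The two case splits match up (your case~A is the paper's second case, your case~B its first), and the divisibility checks you flag as the ``main obstacle'' ($\,(A+B)\mid\delta g$, $C\mid g$, $(4hpq-p-q)\mid nh$, $k\mid h$\,) are exactly the content of the paper's appeal to the two--unit--fraction parametrization; neither approach is materially shorter, but yours avoids quoting that parametrization as a black box.
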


\begin{proof} If $(4abc-1)d=(a+b)n$ or $(4abc-1)d=a n+b$ holds, dividing these equations by $abcd n$, we have respectively
\begin{align}\label{descomposicionSze1}
\frac4n&=\frac1{abc n}+\frac1{bcd}+\frac1{acd},\\ \label{descomposicionSze2}
\frac4n&=\frac1{abc n}+\frac1{bcd}+\frac1{acd n}.
\end{align}

On the other hand, if (\ref{Erd-Str-Conj})
holds, then $4 xyz=n(xy+yz+zx).$ Since $n$ is prime, $n$ divides $x,$ $y$ or $z$. Of course, $n$ does not divide  all the three numbers $x,y,z$ because this trivially yields the contradiction $4=\frac1{x/n}+\frac1{y/n}+\frac1{z/n}$ with $x/n,y/n, z/n$ positive integers.  Hence, we can assume without lost of generality $x=an$. Thus (\ref{Erd-Str-Conj}) is equivalent to
\begin{equation}\label{ecuacAux1}
\frac{4a-1}{n a}=\frac1y+\frac1z \Leftrightarrow\frac{1}{n a}=\frac1{(4a-1)y}+\frac1{(4a-1)z}.
\end{equation}
Since $n$ is prime and $(4a-1,a)=1,$ we  have two cases: $(4a-1,n)=1$, and $(4a-1,n)=n$.

In the first case,  there exist $a_1,a_2,a_3\in\Z_+$ such that $a=a_1a_2a_3$, $(na_1,a_2)=1$, and
\begin{equation}\label{ecuacAux2}
(4a-1)y=n a_1(na_1+a_2)a_3,\quad (4a-1)z=a_2(na_1+a_2)a_3.
\end{equation}
Because $(na,4a-1)=(n a_1 a_2 a_3,4a-1)=1$, there exist $\alpha,\beta$ such that
$y=\alpha n a_1a_3,\, z=\beta a_2a_3.$
From (\ref{ecuacAux2}), we have
$(4a-1)\alpha=(na_1+a_2)=(4a-1)\beta,$
it means $\alpha=\beta$. Therefore, $A=a_1,B=a_2,C=a_3$, and $D=\alpha$ satisfy
\[
(4 ABC-1)D=(n A+B).
\]

Now we consider the second case $(4a-1,n)=n$. So, there exists $j$ such that
\begin{equation}\label{ecuacAux3}
4a-1=j n.
\end{equation}
Setting this expression in (\ref{ecuacAux1}), we have $\frac1{a}=\frac1{j y}+\frac1{j z}.$
Thus, there exist $a_1,a_2,a_3\in\Z_+$ such that $a=a_1a_2a_3$, $(a_1,a_2)=1$ and
\begin{equation}\label{ecuacAux4}
j y=a_1(a_1+a_2)a_3,\quad j z=a_2(a_1+a_2)a_3.
\end{equation}
Since $(j,a)=(j,a_1a_2a_3)=1$, there exist $\alpha,\beta$ such that $y=\alpha a_1a_3$, $z=\beta a_2a_3$. Setting these expression in  (\ref{ecuacAux4}), we have $j\alpha=a_1+a_2=j\beta,$ it is $\alpha=\beta$. Multiplying in (\ref{ecuacAux3}) by $\alpha$, we obtain $A=a_1,\,B=a_2,\,C=a_3,\, D=\alpha$ such that
\[
(4 ABC-1)D=(A+B)n.
\]
\end{proof}

The relations (\ref{condNecSufErdosStraus1}) and (\ref{condNecSufErdosStraus2}) are equivalent to (\ref{descomposicionSze1}) and (\ref{descomposicionSze2}), respectively. Since in the first decomposition of $\frac4n$, $n$ divides one denominator but is coprime to the others, while in the second it is only coprime to one of them, these are respectively refereed as Type I and Type II decomposition of $\frac4n$ (see \cite{Els-Tao}). Observe that it is not possible for all three of $x,y,z$ in (\ref{Erd-Str-Conj}) to be divisible by $n$.

Changing variables in (\ref{condNecSufErdosStraus1}) and (\ref{condNecSufErdosStraus2}) the following lemma follows.


\begin{lem}\label{parametrization}
\begin{enumerate}
\item Let $n$ be a prime. There exist positive integers $a,b,c,d$ such that (\ref{condNecSufErdosStraus1}) holds if and only if there are positive integers $\alpha, \beta,\gamma,\delta$ such that
\begin{equation}\label{eqTipoTres}
\delta n=(4\alpha\beta\gamma\delta-1)-4 \alpha^2\gamma.
\end{equation}
\item  Let $n\in\Z$, $n\ge 2$. There exist positive integers $a,b,c,d$ such that (\ref{condNecSufErdosStraus2}) holds if and only if there are positive integers $\alpha, \beta,\gamma,\delta$ satisfying
\begin{equation}\label{eqTipoDos}
n=(4\alpha\beta\gamma-1)\delta-4 \beta^2\gamma.
\end{equation}
\end{enumerate}
\end{lem}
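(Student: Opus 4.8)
The statement is purely a change of variables, so the plan is to write down, for each of the two equivalences, an explicit substitution in each direction and to verify the resulting polynomial identity by a one-line expansion; in fact the two substitutions in each part will turn out to be mutually inverse, so the two solution sets are in bijection.

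For part (2), in the direction (\ref{condNecSufErdosStraus2}) $\Rightarrow$ (\ref{eqTipoDos}) I would reduce $(4abc-1)d=an+b$ modulo $a$ to get $a\mid b+d$, so that $\delta:=(b+d)/a$ is a positive integer; then setting $(\alpha,\beta,\gamma)=(a,b,c)$, equation (\ref{eqTipoDos}) is checked by multiplying it through by $a$ and substituting $an+b=(4abc-1)d$. For the reverse direction, given (\ref{eqTipoDos}) I would put $(a,b,c,d)=(\alpha,\beta,\gamma,\alpha\delta-\beta)$; here one must see that $d=\alpha\delta-\beta\ge1$, which follows since $n\ge2>0$ forces $(4\alpha\beta\gamma-1)\delta>4\beta^2\gamma$, whence $\alpha\delta>\beta$, and then (\ref{condNecSufErdosStraus2}) drops out after replacing $(4\alpha\beta\gamma-1)\delta$ by $n+4\beta^2\gamma$.

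For part (1) I would do first the easy direction (\ref{eqTipoTres}) $\Rightarrow$ (\ref{condNecSufErdosStraus1}): rewrite (\ref{eqTipoTres}) as $\delta n+1=4\alpha\gamma(\beta\delta-\alpha)$, note the right-hand side is positive so $\beta\delta-\alpha\ge1$, and take $(a,b,c,d)=(\alpha,\,\beta\delta-\alpha,\,\gamma,\,\beta)$; then $4abc-1=\delta n$ and $a+b=\beta\delta$, so $(4abc-1)d=\delta n\cdot\beta=(a+b)n$. For the converse, start from $(4abc-1)d=(a+b)n$ with $n$ prime; the crux is to show $d\mid a+b$. Since $d\mid(a+b)n$ and $n$ is prime, either $\gcd(d,n)=1$, giving $d\mid a+b$ at once by Euclid's lemma, or $n\mid d$, in which case dividing by $n$ gives $(4abc-1)(d/n)=a+b$, hence $4abc-1\le a+b$ (as $d/n\ge1$); but $4abc-1>a+b$ for all positive integers $a,b,c$, since $4abc-1\ge4ab-1$ and $4ab-1-(a+b)=(2a-1)(2b-1)+(a+b-2)\ge1$, so this case cannot occur. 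Once $d\mid a+b$, I would put $(\alpha,\beta,\gamma,\delta)=(a,\,d,\,c,\,(a+b)/d)$; then $\delta n=(a+b)n/d=4abc-1$ and $4\alpha\beta\gamma\delta-1-4\alpha^2\gamma=4ac(a+b)-1-4a^2c=4abc-1$, which is (\ref{eqTipoTres}).

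The only step that is not pure bookkeeping is ruling out $n\mid d$ in the converse of part (1); this is precisely where the primality of $n$ enters, and it reduces to the elementary inequality $4abc-1>a+b$, which makes the obstructing case vacuous. Everything else is routine substitution and expansion, and since the four substitutions above pair up into two mutual inverses, the equivalences come with explicit bijections between the two solution sets.
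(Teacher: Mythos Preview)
Your proof is correct and uses exactly the same substitutions as the paper: $(\alpha,\beta,\gamma,\delta)=(a,d,c,(a+b)/d)$ for part (1) and $(\alpha,\beta,\gamma,\delta)=(a,b,c,(b+d)/a)$ for part (2), together with their inverses. Your write-up is in fact more complete than the paper's, which simply asserts ``then $d$ divides $a+b$'' without argument; your inequality $4abc-1>a+b$ ruling out $n\mid d$ is the clean way to justify that step, and your explicit positivity checks for $\beta\delta-\alpha$ and $\alpha\delta-\beta$ in the reverse directions fill in details the paper omits.
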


\begin{proof}
\begin{enumerate}
\item If $n$ is a prime and (\ref{condNecSufErdosStraus1}) holds, then $d$ divides $a+b$. Set $e=\frac{a+b}{d}$; hence, $b=de-a$ and (\ref{condNecSufErdosStraus1}) yields $en=(4acde-1)-4a^2c.$ Setting $\delta=e$, $\alpha=a$, $\beta=d$, and $\gamma=c$, we obtain (\ref{eqTipoTres}).
\item The relation (\ref{condNecSufErdosStraus2}) is equivalent to $b+d$ is divisible by $a$ and $n+s=4bcd$, where $s=\frac{b+d}{a}\Leftrightarrow d=as-b$. Setting $\alpha=a,\beta=b,\gamma=c,\delta=s$ we obtain immediately  (\ref{eqTipoDos}).
\end{enumerate}
\end{proof}

Equation (\ref{eqTipoTres}) implies that $\delta n$ is an Erd\H{o}s-Straus' number with Type I and Type II decomposition, but it is trivial because if $n\equiv 1$ modulo $4$ it follows $\delta\equiv-1$ modulo $4$. The parametric relation  (\ref{eqTipoDos}) appears implicitly in \cite{Vau}. Setting $\beta=\gamma=1$ in (\ref{eqTipoDos}), it follows  that if $n+4$ has a divisor congruent to $3$ modulo $4$, then $n$ is an Erd\H{o}s-Straus' number. So by the Landau prime ideal theorem (see \cite{Lan} or \cite{Mon-Vau} pp. 266–-268) the set of positive integers which are no Erd\H{o}s-Straus' numbers has zero density. A sharp estimate  on the density of Erd\H{o}s-Straus' numbers can be found in \cite{Vau} (see also \cite{Els-Tao}, \cite{Hua-Vau1}, \cite{Hua-Vau2}, and \cite{Jia}).

In \cite{Yam} Yamamoto (see also \cite{Sch}) proves that a perfect square $n$ does not satisfy  neither (\ref{condNecSufErdosStraus1}) nor (\ref{condNecSufErdosStraus2}) with some restriction in the parameters (see the paragraphs at the begging of Subsection  \ref{SectionNoCuadrados} below), so fixing the parameters $a,b,c,d$ in these equations we can not generated a complete residue system. The following lemma emphasis in this remark without use that result of Yamamoto.

\begin{lem}\label{lemNoResClas} We can not generate a finite number of congruence classes containing all the primes of the form $4q+5$ fixing in (\ref{condNecSufErdosStraus1}) or (\ref{condNecSufErdosStraus2}) three of the four parameters in a finite subset of $\Z_{> 0}^3$ and the remain parameter free in $\Z_{>0}$.
\end{lem}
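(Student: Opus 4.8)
The plan is to reduce the assertion to one clean fact about the generated sets and then close the argument with Dirichlet's theorem on primes in arithmetic progressions.

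The first step is to describe, for a fixed choice of equation, a fixed choice of free parameter, and fixed values of the remaining three parameters, the resulting set of $n$. I claim it is always either finite or a single infinite arithmetic progression $\{\,n:n\equiv r\pmod m\,\}$ with $m\ge 2$ and $r\not\equiv 1\pmod m$. Take (\ref{condNecSufErdosStraus1}) and put $s=a+b$. If $d$ runs free, then $s\mid(4abc-1)d$ confines $d$ to one residue class and $n=(4abc-1)d/s$ runs over the positive multiples of $M=(4abc-1)/\gcd(4abc-1,s)$; here $M\ge 2$ because $4abc-1>a+b$ (indeed $4abc-a-b-1\ge 4ab-a-b-1=(2a-1)(2b-1)+(a+b-2)\ge 1$), so $r=0$, $m=M$. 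If $c$ runs free, the admissible $c$ form (when any exist) one class modulo $s'=s/\gcd(s,d)$, so $n$ runs over a progression with difference $m=4abd/\gcd(s,d)$, a multiple of $4$; its least term $n_0=(4abc_0-1)d/s$ satisfies $1<n_0<m$, the lower bound again from $4abc-1>a+b$ and the upper bound from $c_0\le s'$, whence $r=n_0\not\equiv 1\pmod m$. If $a$ runs free (and, by the $a\leftrightarrow b$ symmetry of (\ref{condNecSufErdosStraus1}), likewise $b$), then $n=(4abcd-d)/(a+b)<4bcd$, so only finitely many values of $n$ arise. Equation (\ref{condNecSufErdosStraus2}) is handled the same way: with $a$ free one needs $a\mid b+d$, hence finitely many $n$; with $d$, $b$, or $c$ free one obtains a progression of difference $4abc-1$, $4acd-1$, or $4bd$ respectively, and in each case one checks that the residue is $\not\equiv 1\pmod m$ by an elementary estimate of the least generated value---for the $d$- and $b$-free cases, equivalently, by noting that $r\equiv 1\pmod m$ would force $m\mid a+b$, resp.\ $m\mid a+d$, which the inequalities $4abc-1>a+b$ and $4acd-1>a+d$ forbid.

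Granting this, assume that for some equation, some free parameter, and some finite set of values of the other three parameters the resulting sets together contained every prime of the form $4q+5$. The finitely many finite sets among them hold only finitely many primes, so delete them; there remain finitely many progressions $n\equiv r_i\pmod{m_i}$, $1\le i\le k$, with every $m_i\ge 2$ and every $r_i\not\equiv 1\pmod{m_i}$, still covering all but finitely many primes $4q+5$. Put $N=\operatorname{lcm}(4,m_1,\dots,m_k)$. By Dirichlet's theorem there are infinitely many primes $p\equiv 1\pmod N$; each such $p$ is $\equiv 1\pmod 4$, hence of the form $4q+5$ with $q=(p-5)/4\ge 0$ (as $p\ge 5$), but $p\equiv 1\pmod{m_i}$ while $r_i\not\equiv 1\pmod{m_i}$, so $p$ lies in none of the $k$ progressions, and for $p$ large in none of the deleted finite sets. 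This contradicts the covering hypothesis and proves the lemma.

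I expect essentially all the work to be in the first step: the eight cases (the two equations times the four free parameters) and, within each progression case, the verification that its residue is not $\equiv 1$ modulo its difference. Each such verification is elementary, resting only on the few inequalities displayed above; once the first step is in place, the Dirichlet argument of the second step is immediate.
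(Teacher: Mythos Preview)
Your argument is correct. Both your proof and the paper's reach the same endpoint---there is a modulus $N$ (a multiple of $4$) such that no number $\equiv 1\pmod N$ is generated, so Dirichlet furnishes infinitely many uncovered primes $4q+5$---but the routes to that endpoint differ. The paper first passes through the reformulations of Lemma~\ref{parametrization} (equations (\ref{eqTipoTres}) and (\ref{eqTipoDos})) and then, for each free-parameter case, produces a modulus $T$ so that any purported representation of $4Tt+1$ could be shifted to a representation of $1$, contradicting the trivial fact that $1$ is not an Erd\H{o}s-Straus number. You instead stay with the original equations (\ref{condNecSufErdosStraus1}) and (\ref{condNecSufErdosStraus2}) and verify directly, case by case, that each generated set is either finite or an arithmetic progression whose residue is $\not\equiv 1$ modulo its difference, the latter following from the elementary bound $4abc-1>a+b$ and its variants. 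Your approach is more self-contained (no need for the change of variables in Lemma~\ref{parametrization}); the paper's ``$1$ is not Erd\H{o}s-Straus'' device is slicker once that lemma is in hand and makes the obstruction more conceptual. One small point of phrasing: your second paragraph reads as fixing a single equation and a single free parameter, whereas the lemma allows all eight configurations at once; your lcm--Dirichlet argument of course handles the general case without change, so this is purely cosmetic.
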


\begin{proof} Let $S_a=\{(b,c,d)\in\Z_{> 0}^3\}$ denote a finite subset of $\Z_{> 0}^3$ of values taken by $(b,c,d)$. From (\ref{condNecSufErdosStraus1}) or (\ref{condNecSufErdosStraus2}), when $a$ is free taking any positive number, each fixed vector $(b,c,d)\in S_a$ may generate a residue class of Erd\H{o}s-Straus' numbers  $n$. Equivalently  we define $S_b,S_c$ and $S_d$.  We fixe four such subset $S_a,S_b,S_c,S_d$ and first prove that there exist infinite prime numbers $n$ which can not be generated by (\ref{condNecSufErdosStraus2}) with parameters $(a,b,c,d)$ with three of them in the corresponding set $S_a, S_b, S_c,$ or $S_d$.

The numbers $n$ generate by (\ref{condNecSufErdosStraus2}) with $(b,c,d)\in S_a$ are
a finite set because  $a|(b+d)$. Moreover, observe that in the proof of (\ref{eqTipoDos}) in Lemma \ref{parametrization}, the numbers $n$ generate by (\ref{condNecSufErdosStraus2}) or equivalently by (\ref{eqTipoDos})  with $(b,c,d)\in S_c$ are given by
\[
n=4(\alpha\delta-\beta)\beta\gamma-\delta=4bcd-\frac{b+d}{a},\quad (a,b,d)\in S_c,
\]
where $\alpha=a$, $\beta=b$, $\gamma=c$, and $\delta=\frac{b+d}{a}$. Taking $T_c=\textnormal{lcm}\{bd:(a,b,d)\in S_c\}$, the numbers  in $\{4T_ct+1:t\in\Z_{>0}\}$ (in particular the prime numbers in $\{4T_ct+1:t\in\Z_{>0}\}$) can not be generated by (\ref{eqTipoDos}) with $(a,b,d)\in S_c$, here $\textnormal{lcm}\{bd:(a,b,d)\in S_c\}$ denotes the least common multiple of the numbers $bd$ when $(a,b,d)$ runs through the set $S_c$. Suppose for contradiction that for a given $t\in\Z_{>0}$ there exist $(a_0,b_0,d_0)\in S_c$ and $c\in\Z_{>0}$ such that
\[
4T_ct+1=4b_0cd_0-\frac{b_0+d_0}{a_0}\Leftrightarrow 1=4b_0d_0(c-e)-\frac{b_0+d_0}{a_0},
\]
where $e\in\Z_{>0}$, but it is a contradiction since $1$ is not an Erd\H{o}s-Straus' number.

Using the same arguments and notations as before, when $(a,b,c)\in S_d$, then according to the proof of (\ref{eqTipoDos}) in Lemma \ref{parametrization}, $(\alpha,\beta,\gamma)$ lies in a finite subset $S_{\delta}$ of $\Z_{> 0}^3$,  and $\delta\in\Z_{>0}$. Taking $T_d=\textnormal{lcm}\{(4\alpha\beta\gamma-1):(\alpha,\beta,\gamma)\in S_{\delta}\}$, the numbers in $\{4T_d t+1:t\in\Z_{>0}\}$ can not be generated by (\ref{eqTipoDos})  with $(a,b,c)\in S_d$. Suppose for a $t\in\Z_{>0}$ there exists $(\alpha_0,\beta_0,\gamma_0)\in S_\delta$ and $\delta\in \Z_{>0}$ such that
\[
4T_dt+1=(4\alpha_0\beta_0\gamma_0-1)\delta-4\beta_0^2\gamma_0\Leftrightarrow 1=(4\alpha_0\beta_0\gamma_0-1)(\delta-4e)-4\beta_0^2\gamma_0
\]
for some $e$, again it is not possible because $1$ is not an Erd\H{o}s-Straus' number. From the symmetry of (\ref{eqTipoDos}) in $b$ and $d$ the same conclusion holds for $S_b$ for numbers $\{4T_{b} t+1:t\in\Z_{>0}\}$, where $T_{b}$ is defined as $T_d$.  Of course all the numbers
\[
\{4T_bT_cT_dt+1:t\in\Z_{>0}\}
\]
can not be generated by parameters $(a,b,c,d)$ with three of them in the corresponding set $S_a, S_b, S_c$ or $S_d$.

The same arguments work for (\ref{condNecSufErdosStraus1}). For example, one of the parameters $a$ or $b$ in (\ref{condNecSufErdosStraus1}) can not take values in an infinite subset of positive integers while other three belong to a finite subset. In fact, observe that (\ref{condNecSufErdosStraus1}) is symmetric in $a$ and $b$, and according to the proof of (\ref{eqTipoTres}) in Lemma \ref{parametrization}, the number $e=\frac{a+b}{d}$ divides $1+4a^2c$, so that if $(a,c,d)$ runs over a finite set, the numbers of divisors of  $1+4a^2c$ is finite. Moreover, the numbers $n$ generate by (\ref{condNecSufErdosStraus1}) or equivalently by (\ref{eqTipoTres})  with $(b,c,d)\in S_c$ do not contain $\{4T_c't+1:t\in\Z_{\ge 0}\}$ where $T_c'=\textnormal{lcm}\{abd:(a,b,d)\in S_c\}$. Suppose for contradiction that for a given $t$ there exist $(a_0,b_0,d_0)\in S_c$ and $c\in\Z_{>0}$ such that
\[
(a_0+b_0) (4T_c't+1)=(4a_0b_0c-1)d_0\Leftrightarrow (a_0+b_0) 1=(4a_0b_0(c-e)-1)d_0
\]
where $e=\frac{(a_0+b_0)T_c't}{a_0b_0d_0}$, but it is a contradiction since $1$ is not an Erd\H{o}s-Straus' number.
For $(a,b,c)\in S_d$, we can not generate the numbers $4T_d't+1$, where $T'_d=\textnormal{lcm}\{(4abc-1):(a,b,c)\in S_d\}$.

Therefore, equations (\ref{condNecSufErdosStraus1}) and (\ref{condNecSufErdosStraus2}) can not generate all the numbers
\begin{equation}\label{NoCovers}
\{4T_bT_cT_dT'_cT'_dt+1:t\in\Z_{>0}\}
\end{equation}
except possibly a finite number of them with three parameters of $a,b,c,d$ in the corresponding set $S_a, S_b, S_c$ or $S_d$.

\end{proof}

\begin{rmk} Note that using Yamamoto result we can actually prove that equations (\ref{condNecSufErdosStraus1}) and (\ref{condNecSufErdosStraus2}) can not generate all the numbers $\{4T_bT_cT_dT'_cT'_dt+n_0:t\in\Z_{>0}\}$ except possibly a finite number of them with three parameters of $a,b,c,d$ in the corresponding finite set $S_a, S_b, S_c$ or $S_d$, where $n_0$ is a quadratic residue modulo $4T_bT_cT_dT'_cT'_d$.

\end{rmk}

Many papers devote special attention to Type II decomposition since  parametric solutions of ESC are easy obtained for this case, see (\ref{eqTipoDos}). Our following lemma allows us to find a parametric solution (\ref{pol}) of Type I decomposition.

\begin{lem}\label{LemAuxUnDenMul} Let $n\in\Z_{>0}$. There exist positive integers $a,b,c,d$ such that (\ref{condNecSufErdosStraus1}) holds if and only if there are $x,t,\lambda$ such that
\begin{equation}\label{sisEcuaCar}
\left\{\begin{array}{l}
\frac{x\,n+t}{\lambda}\in\Z_{>0},\\ \frac{n+\lambda}{4\, x\, t}\in\Z_{>0}\end{array}\right.
\end{equation}
are satisfied.
\end{lem}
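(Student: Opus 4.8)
The plan is to set up an explicit change of variables between the quadruples $(a,b,c,d)\in\Z_{>0}^4$ solving (\ref{condNecSufErdosStraus1}) and the triples of positive integers $(x,t,\lambda)$ solving (\ref{sisEcuaCar}), so that the asserted equivalence reduces to a short algebraic verification in each direction. The guiding remark is that $(4abc-1)d=(a+b)n$, rewritten as $4abcd=n(a+b)+d$, determines $c$ linearly in terms of $a,b,d$; accordingly I expect $x$ and $t$ to play the roles of $a$ and $d$, the fraction $\frac{n+\lambda}{4xt}$ to play the role of $c$, and $\lambda$ to play the role of $4acd-n$ (equivalently $\frac{an+d}{b}$).

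For the forward implication I would take a solution $(a,b,c,d)$ of (\ref{condNecSufErdosStraus1}) and set $x=a$, $t=d$, $\lambda=4acd-n$. Subtracting $bn$ from both sides of $4abcd=n(a+b)+d$ gives the identity $b\,(4acd-n)=an+d$, which simultaneously shows that $\lambda=4acd-n=\frac{an+d}{b}$ is a positive integer and that $\frac{xn+t}{\lambda}=\frac{an+d}{\lambda}=b\in\Z_{>0}$; moreover $\frac{n+\lambda}{4xt}=\frac{4acd}{4ad}=c\in\Z_{>0}$ is immediate. Hence $(x,t,\lambda)$ satisfies (\ref{sisEcuaCar}).

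For the converse I would take positive integers $x,t,\lambda$ satisfying (\ref{sisEcuaCar}), set $a=x$, $d=t$, $b=\frac{xn+t}{\lambda}$ and $c=\frac{n+\lambda}{4xt}$ — all positive integers by hypothesis — and run the same computation backwards: from $4xtc=n+\lambda$ one obtains $\lambda=4adc-n$, and substituting this into $b\lambda=xn+t$ yields $4abcd-bn=an+d$, that is $4abcd=n(a+b)+d$, i.e. $(4abc-1)d=(a+b)n$. I do not anticipate any real obstacle; the only step that deserves a line of care is checking that the $\lambda$ produced in the forward direction is genuinely a positive integer, which is exactly what the rearranged identity $b\,(4acd-n)=an+d$ delivers. (One should read (\ref{sisEcuaCar}) as asserting the existence of $x,t,\lambda$ in $\Z_{>0}$, since the construction forces $x=a$ and $t=d$.)
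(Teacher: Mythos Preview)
Your proof is correct and follows essentially the same change of variables as the paper: $a\leftrightarrow x$, $d\leftrightarrow t$, $b=\frac{xn+t}{\lambda}$, $c=\frac{n+\lambda}{4xt}$, with $\lambda=4acd-n$. The paper compresses the argument into the single chain $xn+t=y\lambda,\ \lambda=4zxt-n\Leftrightarrow (x+y)n=(4xyz-1)t$ and then sets $a=x,\ b=y,\ c=z,\ d=t$; your version spells out both directions separately and is a bit more careful about the positivity of $\lambda$, but the substance is the same.
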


\begin{proof} Assume that there exist positive integers $x,t,\lambda,y,\, z$ such that
\begin{align*}
\frac{x\,n+t}{\lambda}=y,\, \frac{n+\lambda}{4\, x\, t}=z&\Leftrightarrow x\,n+t=y\,\lambda,\,\lambda=4\,z\, x\, t -n\\
&\Leftrightarrow (x+y)n=(4\, x\, y\, z-1)t.
\end{align*}
Therefore, taking $a=x,b=y,c=z,d=t$ we have $(4abc-1)d=(a +b)n$.
\end{proof}

\begin{rmk} If $n$ is an Erd\H{o}s-Straus' number and  (\ref{sisEcuaCar}) is satisfied for positive integers $x,t,\lambda$, then for all $j\in\Z_{\ge0}$,
$N=n+4xt\lambda\, j$ is also an Erd\H{o}s-Straus' number. In fact,
\[
\left\{\begin{array}{l}\frac{x\,N+t}{\lambda}= \frac{x\,(n+4xt\lambda\, j)+t}{\lambda}=\frac{x n+t}{\lambda}+4x^2tj\in\Z_+\\ \frac{N+\lambda}{4\, x\, t}=\frac{(n+4xt\lambda\, j)+\lambda}{4\, x\, t}=\frac{n+\lambda}{4\, x\, t}+\lambda\, j\in\Z_+\end{array}\right.
\]
In particular, the set of Erd\H{o}s-Straus' numbers is an open set in Furstenberg's topology (see \cite{Niv-Zuc-Mon}, p. 34).
\end{rmk}

\begin{rmk} The values
\[
n=p(\alpha,\beta,\gamma)=(\alpha+1)(4\beta+3)(4\gamma+3)-(\alpha+1)-(4\beta+3)
\]
satisfy (\ref{sisEcuaCar}) with $x=1$, $t=\alpha+1$, and $\lambda=4\beta+3$.
\end{rmk}

The parametric relation (\ref{pol}) for $p=4q+5$ is useful to rewrite for $q$. Since
\[
p(\alpha,\beta,\gamma)=4q(\alpha,\beta,\gamma)+5=4\left(((4\beta+3)\gamma+(3\beta+2))(\alpha+1)-(\beta+2)\right)+5,
\]
the following lemma follows:
\begin{lem}\label{Lem_pol_q} If there exists $(\alpha,\beta,\gamma)\in\Z_{\ge 0}$ such that
\begin{equation}\label{pol_q}
q=q(\alpha,\beta,\gamma)=((4\beta+3)\gamma+(3\beta+2))(\alpha+1)-(\beta+2),
\end{equation}
then the Erd\H{o}s-Straus Conjecture holds for $p=4q+5.$
\end{lem}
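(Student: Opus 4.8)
The plan is to verify the claimed algebraic identity and then invoke Lemma~\ref{LemAuxUnDenMul} (via the remark that the values $n=p(\alpha,\beta,\gamma)$ satisfy \eqref{sisEcuaCar} with $x=1$, $t=\alpha+1$, $\lambda=4\beta+3$). Concretely, starting from
\[
p(\alpha,\beta,\gamma)=(\alpha+1)(4\beta+3)(4\gamma+3)-(\alpha+1)-(4\beta+3),
\]
I would expand the product $(\alpha+1)(4\beta+3)(4\gamma+3)$ and collect terms so as to pull out a factor of $4$ from everything except a constant remainder of $5$; the point is that $(4\beta+3)(4\gamma+3)=16\beta\gamma+12\beta+12\gamma+9$, and after multiplying by $(\alpha+1)$ and subtracting $(\alpha+1)+(4\beta+3)$ the surviving ``$\bmod\ 4$'' part should reduce to $5$. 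Grouping the remaining terms, one checks that
\[
p(\alpha,\beta,\gamma)-5=4\Bigl(\bigl((4\beta+3)\gamma+(3\beta+2)\bigr)(\alpha+1)-(\beta+2)\Bigr),
\]
which is exactly the displayed identity, so defining $q(\alpha,\beta,\gamma)$ by \eqref{pol_q} gives $p(\alpha,\beta,\gamma)=4q(\alpha,\beta,\gamma)+5$.

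Once this identity is in place, the conclusion is immediate: since $p(\alpha,\beta,\gamma)\in\mathcal{N}_1$ for every $(\alpha,\beta,\gamma)\in\Z_{\ge0}^3$, the remark following Lemma~\ref{LemAuxUnDenMul} shows that \eqref{sisEcuaCar} holds for $n=p(\alpha,\beta,\gamma)$ with $x=1$, $t=\alpha+1$, $\lambda=4\beta+3$, hence (by Lemma~\ref{LemAuxUnDenMul}) there are positive integers $a,b,c,d$ satisfying \eqref{condNecSufErdosStraus1}, which by Lemma~\ref{BasicCharac} (or directly via \eqref{descomposicionSze1}) means $\frac4n$ admits an Erd\H{o}s-Straus decomposition. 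Therefore whenever $p=4q+5$ with $q=q(\alpha,\beta,\gamma)$ for some $(\alpha,\beta,\gamma)\in\Z_{\ge0}^3$, the number $p$ is an Erd\H{o}s-Straus' number, which is precisely the statement of Lemma~\ref{Lem_pol_q}.

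There is essentially no obstacle here: the lemma is a bookkeeping corollary of the preceding development, and the only thing to do carefully is the polynomial expansion, making sure the constant term comes out to exactly $5$ and that the coefficient of $4$ matches \eqref{pol_q}. A convenient way to present it is to write $n+5$ in terms of $n=p(\alpha,\beta,\gamma)$, or rather to compute $p(\alpha,\beta,\gamma)$ directly and observe $p\equiv 5\pmod 4$ automatically (indeed $p=4q+5$ forces $p\equiv 1\pmod 4$, consistent with the reduction of ESC to primes $\equiv 1\pmod 4$ noted in the introduction). I would therefore keep the proof to one or two lines: state the expansion identity, note that it rewrites $p$ as $4q+5$ with $q$ given by \eqref{pol_q}, and cite Lemma~\ref{LemAuxUnDenMul} together with the accompanying remark to finish.
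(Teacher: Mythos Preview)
Your proposal is correct and follows exactly the paper's approach: the paper proves the lemma simply by displaying the identity $p(\alpha,\beta,\gamma)=4\bigl(((4\beta+3)\gamma+(3\beta+2))(\alpha+1)-(\beta+2)\bigr)+5$ immediately before the lemma statement, relying on the already-established fact that every value $p(\alpha,\beta,\gamma)$ is an Erd\H{o}s-Straus number. Your write-up is just a more explicit version of this, spelling out the expansion and citing Lemma~\ref{LemAuxUnDenMul} and its remark rather than leaving those references implicit.
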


\section{Consecutive numbers satisfying ESC}\label{sectionConsecutiveNumbers}
Next, using the Chinese Remainder Theorem and  Lemma \ref{Lem_pol_q} we prove Theorem \ref{teoConsESN}. Actually, we prove that there exist arbitrarily long sequence of consecutive residues class such that $\frac4n$ has Type I decomposition for all $n$ in this residues class.

\textit{Proof of Theorem \ref{teoConsESN}.} Of course, to get Theorem \ref{teoConsESN} it is enough to consider ``consecutive'' numbers $n\equiv 1$ mod 4. Let $N$ be an arbitrary positive integer and let $A$ be a subset of $\Z_{>0}$ containing $N$ consecutive nonnegative integers, for example $A=\{0,1,\ldots, N-1\}$.  If  $\{\beta_1,\beta_2\}\subset A$, then the greatest common divisor of $4\beta_1+3$ and $4\beta_2+3$, $(4\beta_1+3,4\beta_2+3)$, is an odd number and
\begin{multline*}
(4\beta_1+3,4\beta_2+3)|(4\beta_1+3-(4\beta_2+3))=4(\beta_1-\beta_2)\\ \Rightarrow(4\beta_1+3,4\beta_2+3)|3(\beta_1-\beta_2) =3\beta_1+2-(3\beta_2+2).
\end{multline*}
Thus, by the Chinese Remainder Theorem, there is a natural number $T$ such that
\[
T\equiv 3\beta_j+2\quad (\text{mod}\, 4\beta_j+3),\quad \forall \beta_j\in A,
\]
i.e., there exist positive integers $\gamma_j$ such that
\[
T=(4\beta_j+3)\gamma_j+3\beta_j+2,\quad \forall \beta_j\in A.
\]
According to Lemma \ref{Lem_pol_q} all $n=4q+5$ with $q$ in the consecutive residue class $-(\beta_j+2)\,\, (\text{mod}\,\,T)$, $\beta_j\in A$, satisfy the Erd\H{o}s-Straus conjecture.

\hfill $\Box$

\begin{rmk} Moreover, we can also prove that there exists arbitrarily long sequence of consecutive numbers $n$ such that $\frac4n$ has Type II decomposition. Let $A$ be a set of natural numbers containing consecutive numbers, for each $a\in A$, we choose natural numbers $\beta(a)$ and $\gamma(a)$ such that $a=\beta(a)^2\gamma(a)$  (such representation is unique taking $\gamma(a)$ free of squares). We chose $T$ as the least common multiple of $(4\beta(a)\gamma(a)-1)$, where $a$ takes the values in $A$, i.e.
\[
T=\textnormal{lcm}\{(4\beta(a)\gamma(a)-1):a\in A\},\quad \textnormal{and}\quad \delta=\left\{\begin{array}{lr}1,&\textnormal{if }T\equiv 1 \textnormal{ mod }4,\\ 3 ,&\textnormal{if }T\equiv -1 \textnormal{ mod }4,\end{array}\right.
\]
then, using (\ref{eqTipoDos}) in Lemma \ref{parametrization},  it follows that all the fractions $\frac4{T\,\delta-4a},\, a\in A,$ have Type II decomposition.
\end{rmk}


\section{Additional remarks} \label{SectionAditional}

\subsection{ $q-$Conjecture}

\begin{lem}\label{LemConjQ} ESC is true if and only if for each $q\in\Z_{>0}$ there exists $(x,y,z)\in\Z_{\ge0}^3$ such that one of the following relations holds
\begin{eqnarray}
\label{ManoloEq1} q=1+3x+3y+4xy,\\ \label{ManoloEq2} q=5+5x+5y+4xy,\\ \label{ManoloEq3} q=q(x,y,z).
\end{eqnarray}
where $q(x,y,z)$ is defined by (\ref{pol_q}).
\end{lem}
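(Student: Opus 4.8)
The plan is to reduce ESC to the residue class $n\equiv1\pmod4$ --- equivalently, to the assertion that $4q+5$ is an Erd\H{o}s-Straus number for every $q\in\Z_{\ge0}$ --- and then prove the two implications separately. For the reduction: an even $n$ reduces to $n/2$ via $\frac4{2m}=\frac2m=\frac1m+\frac1{m+1}+\frac1{m(m+1)}$, while $n=4k+3$ is handled by $\frac4{4k+3}=\frac1{k+1}+\frac1{(k+1)(4k+3)}$ after splitting the last summand once more to obtain three; and $q=0$ is covered because $\frac45=\frac12+\frac14+\frac1{20}$ (also $q(0,0,0)=0$ and $p(0,0,0)=5$), so only $q\ge1$ is at issue.

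\textbf{Direction ``$\Leftarrow$''.} Assuming that for each $q\in\Z_{>0}$ one of (\ref{ManoloEq1}), (\ref{ManoloEq2}), (\ref{ManoloEq3}) holds, I would argue by strong induction on $q$ (with $q=0$ settled above) that $n=4q+5$ is an Erd\H{o}s-Straus number. If (\ref{ManoloEq3}) holds then $n=p(x,y,z)\in\mathcal{N}_1$, so Lemma~\ref{Lem_pol_q} gives a decomposition of $\frac4n$. If (\ref{ManoloEq1}) holds then $n=(4x+3)(4y+3)$, and since $4x+3\equiv3\pmod4$ is an Erd\H{o}s-Straus number dividing $n$, so is $n$. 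If (\ref{ManoloEq2}) holds then $n=(4x+5)(4y+5)$ with $5\le4x+5<n$, and $4x+5=4q'+5$ with $q'=x<q$; by the induction hypothesis $4x+5$ is an Erd\H{o}s-Straus number, hence so is its multiple $n$. This proves ESC.

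\textbf{Direction ``$\Rightarrow$''.} Assume ESC, fix $q\in\Z_{>0}$, and put $n=4q+5$. If $n$ is composite, write $n=uv$ with $1<u\le v<n$; then $u,v$ are odd, and $uv\equiv1\pmod4$ forces $u\equiv v\pmod4$. If $u\equiv v\equiv3\pmod4$, put $u=4x+3$, $v=4y+3$ ($x,y\ge0$) and expand $n=(4x+3)(4y+3)$ to obtain (\ref{ManoloEq1}); if $u\equiv v\equiv1\pmod4$, then $u,v\ge5$, so put $u=4x+5$, $v=4y+5$ and expand to obtain (\ref{ManoloEq2}). If $n$ is prime, then by ESC it is an Erd\H{o}s-Straus number, hence by Lemma~\ref{BasicCharac} it admits a Type~I decomposition (\ref{condNecSufErdosStraus1}) or a Type~II decomposition (\ref{condNecSufErdosStraus2}); it then remains to rewrite it in the form (\ref{eqTipoI}), i.e.\ $n=abc-a-b$ with $a\ge1$ and $b,c\equiv3\pmod4$, for then $n=p(a-1,(b-3)/4,(c-3)/4)$ and (\ref{ManoloEq3}) holds.

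\textbf{Main obstacle.} The single substantive step is this last normalization in the prime case: turning an arbitrary Erd\H{o}s-Straus decomposition of a prime $n\equiv1\pmod4$ into one of shape (\ref{eqTipoI}). For a Type~I decomposition I would begin from the parametrization (\ref{eqTipoTres}), use that $\delta\equiv-1\pmod4$ and that primality of $n$ forces $n\nmid d$ (hence $\gcd(n,d)=1$), and reduce to the case $a=1$ in (\ref{condNecSufErdosStraus1}) (where, by Lemma~\ref{LemAuxUnDenMul}, this corresponds to a solution of (\ref{sisEcuaCar}) with $x=1$); for a Type~II decomposition one must first convert (\ref{condNecSufErdosStraus2}) into a Type~I decomposition. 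The composite case and the direction ``$\Leftarrow$'' are then routine.
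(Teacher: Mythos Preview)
Your ``$\Leftarrow$'' direction is correct and essentially matches the paper's argument. The paper observes that (\ref{ManoloEq1}) and (\ref{ManoloEq2}) exactly parametrize the $q$ with $4q+5$ composite, so if every $q$ satisfies one of the three relations then every \emph{prime} $4q+5$ must satisfy (\ref{ManoloEq3}) and hence lie in $\mathcal{N}_1$; ESC then follows from Lemma~\ref{Lem_pol_q} together with the standard reduction to primes $\equiv1\pmod4$. Your inductive variant is a harmless repackaging of the same idea.

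The ``$\Rightarrow$'' direction is another matter. You correctly isolate the obstacle: for a prime $n=4q+5$ one would have to show that \emph{some} Erd\H{o}s--Straus decomposition of $n$ can be normalized to the specific shape $n=abc-a-b$ with $b\equiv c\equiv3\pmod4$, i.e.\ that $n\in\mathcal{N}_1$. Your sketched route---reduce a Type~I solution to $a=1$ via (\ref{eqTipoTres})/(\ref{sisEcuaCar}), and convert any Type~II solution to a Type~I one---does not go through: neither reduction is known in general, and nothing prevents a prime whose only decompositions are of Type~II from lying outside $\mathcal{N}_1$. In fact the paper does \emph{not} prove this direction either. Its proof of the lemma establishes only ``$\Leftarrow$'', and immediately afterwards the paper formulates ``every prime $4q+5$ lies in $\mathcal{N}_1$'' as a separate open \emph{conjecture} (the $q$-Conjecture). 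So the ``if and only if'' in the lemma statement is an overstatement on the authors' part; only the implication you successfully proved is actually justified, and the gap you flagged is genuine and, as far as anyone knows, unfillable at present.
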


\begin{proof} Because
\[
q=1+3x+3y+4xy\Leftrightarrow 4q+5=(4x+3)(4y+3),
\]
\[
q=5+5x+5y+4xy\Leftrightarrow 4q+5=(4(x+1)+1)(4(y+1)+1),
\]
all $q$ for composite numbers $4q+5$ satisfy (\ref{ManoloEq1}) or (\ref{ManoloEq2}).  We also have for $q=q(x,y,z)$ define by (\ref{pol_q}),
\[
q=q(x,y,z)=\frac{p(x,y,z) - 5}{4}.
\]
Hence if all $q\in\Z_{>0}$ satisfies (\ref{ManoloEq1}), (\ref{ManoloEq2}) or (\ref{ManoloEq3}), the value of $q$ for all prime numbers of the form $4q+5$ lies in $\mathcal{N}_1$ and the proof is concluded using Lemma \ref{Lem_pol_q}.
\end{proof}

\begin{rmk}
We have verified that $\mathcal{N}_1$ contains all primes of the form $n=4q+5< 10^{14}$. To this aim we have generated the classes of equivalence of numbers contained in $\mathcal{N}_1$ translating $t$ units each variable; i.e.
\begin{eqnarray*}
N_x=p(x,y,z)+f_1(y,z)t\overset{\textnormal{def}}=p(x,y,z)+4\frac{(4y+3)(4z+3)-1}4t,\\
N_y=p(x,y,z)+f_2(x,z)t\overset{\textnormal{def}}=p(x,y,z)+4((x+1)(4z+3)-1)t,\\
N_z=p(x,y,z)+f_3(x,y)t\overset{\textnormal{def}}=p(x,y,z)+4(x+1)(4y+3)t.
\end{eqnarray*}
Starting with $x,y,z\in\{0,1\}$ we obtain the following equivalence class
\[
5+8t,\,5+12t,\,13+20t,\,17+20t,\,13+28t,
37+52t,\quad t\in\Z_{\ge 0},
\]
then we sieve prime numbers congruent to 1 modulo 4 in these congruence classes. We are checked by computer calculations that the remaining primes $<10^{14}$ belong to $\mathcal{N}_1$.

\end{rmk}

{\bf Conjecture.} We conjecture that all the natural number $q\in\Z_{>0}$ can be written as one of the three above relations (\ref{ManoloEq1}), (\ref{ManoloEq2}) or (\ref{ManoloEq3}). We refer it as the $q-$Conjecture. According to the lemma above it is equivalent to all prime numbers of the form $4q+5$ lie in $\mathcal{N}_1.$

\subsection{$\mathcal{N}_1$ does not contain perfect squares}\label{SectionNoCuadrados}

Here we prove that $\mathcal{N}_1$ does not include perfect squares, we need to use Jacobi's symbol.  Let $p$ be an odd prime and $n\in \Z$ with $(n,p)=1$, Legendre's symbol is defined by $$
\left(\frac{n}{p}\right)=\left\{\begin{array}{rl}1,&\textnormal{if $n$ is a quadratic residue mod $p$,}\\ -1,&\textnormal{if $n$ is a quadratic non-residue mod $p$. }\end{array}\right.$$
Let the standard factorization of $m$ be $p_1p_2\cdots p_k$ where the $p_r$ may be
repeated. If $(n, m) = 1$ then Jacobi's symbol is defined in term of the Legendre symbol by
$\left(\frac{n}{m}\right)=\prod_{j=1}^k\left(\frac{n}{p_j}\right).$
The following properties of the Jacobi symbol are very well known: Let $m$ and $m'$ be positive odd integers.
(i.) If $n \equiv n'$ (mod $m$) and $(n, m) = 1$, then $\left(\frac{n}{m}\right) = \left(\frac{n'}{m}\right)$. (ii.) If $(n, m)= (n, m') = 1$, then $\left(\frac{n}{m}\right)\left(\frac{n}{m'}\right)=\left(\frac{n}{mm'}\right)$.
(iii.) If( $n,m) = (n',m) = 1$, then $\left(\frac{n}{m}\right)\left(\frac{n'}{m}\right)= \left(\frac{nn'}{m}\right)$. (iv.) If $n\equiv 3$ (mod $4$), $(-1/n)=-1$.
(v.) {\it Law of reciprocity.} Let $m$ and $n$ be odd coprime. Then
$\left(\frac{n}{m}\right)\left(\frac{m}{n}\right)=(-1)^{\frac{n-1}{2}\frac{m-1}{2}}.$
(vi.) If $n,d$ are coprime positive integers and  $m$ satisfies  $n\equiv-m $ (mod $d$), then $
\left(\frac{d}{n}\right)=\left(\frac{d}{m}\right),$ see \cite{Hua}, p. 305.
The Kronecker symbol is defined in term of Jacobi's symbol. It satisfies $\left(\frac{n}{m}\right)=0$ for $(n,m)\ge 2$.

In \cite{Yam}, Yamamoto observes that numbers $n$, satisfying $(4abc-1)d=(a +b)n$ for some positive integers $a,b,c,d$ with $(n,abd)=1$, are no perfect square. Observe the symmetry of $(4abc-1)d=(a +b)n$ in $a$ and $b$ and Yamamoto proves, using our notations, $\left(\frac{n}{4ad}\right)=-1$ for the Kronecker symbol. Moreover, if $(4abc-1)d=(a +b)n$, then $(4abc-1)d'=(a +b)n^2$ with $d'=dn$.  Our class $\mathcal{N}_1$ contains numbers $n$ which are not included in Yamamoto class. For example, it is the case of $n=2009,$ where $a=1$, $b=293$, $c=12$, and $d=42$, meanwhile $\alpha+1=42,$ $4\beta+3=7,$ and $4\gamma+3=7$. In fact, we have
\begin{eqnarray*}
2009=42\cdot7\cdot7-42-7,\\
2009(1+293)=(4\cdot1\cdot 293-1)42,\\
(2009,293\cdot42)=7.
\end{eqnarray*}

\begin{lem}\label{lemNoSquares} $\mathcal{N}_1$ does not contain perfect squares.\end{lem}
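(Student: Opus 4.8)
The plan is to show that if $n = p(\alpha,\beta,\gamma)$ for some $(\alpha,\beta,\gamma) \in \Z_{\ge 0}^3$, then $n$ cannot be a perfect square, by computing a Jacobi (or Kronecker) symbol that forces the value $-1$. From the Remark immediately preceding the statement, the values $n = p(\alpha,\beta,\gamma)$ satisfy the system (\ref{sisEcuaCar}) with $x = 1$, $t = \alpha+1$, and $\lambda = 4\beta+3$. Equivalently, by Lemma \ref{LemAuxUnDenMul} and the proof of Lemma \ref{parametrization}, we have the relation $(4abc-1)d = (a+b)n$ with $a = 1$, $b = (\alpha+1)(4\gamma+3) - 1$, $c = \gamma$ (or a suitable shift), $d = \alpha+1$; more directly, $n = (\alpha+1)(4\beta+3)(4\gamma+3) - (\alpha+1) - (4\beta+3)$, so reducing modulo $\alpha+1$ gives $n \equiv -(4\beta+3)$, and reducing modulo $4\beta+3$ gives $n \equiv -(\alpha+1)$.

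First I would set $u = \alpha+1$ and $v = 4\beta+3$, so that $n \equiv -v \pmod{u}$ and $n \equiv -u \pmod{v}$, with $v$ odd and $v \equiv 3 \pmod 4$. The key computation is the Kronecker symbol $\left(\frac{n}{uv}\right)$ (after handling the common factor $g = (u,v)$, which by property (vi) and the congruences divides $n$, so in fact we work with $\left(\frac{n}{uv/g^2}\right)$ or argue that $g \mid n$ already shows $n$ is nonsquare unless $g^2 \mid n$, then iterate). Using multiplicativity, $\left(\frac{n}{uv}\right) = \left(\frac{n}{u}\right)\left(\frac{n}{v}\right)$. By property (i) and the congruences, $\left(\frac{n}{u}\right) = \left(\frac{-v}{u}\right)$ and $\left(\frac{n}{v}\right) = \left(\frac{-u}{v}\right)$. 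Now I split off the sign: $\left(\frac{-v}{u}\right) = \left(\frac{-1}{u}\right)\left(\frac{v}{u}\right)$ and $\left(\frac{-u}{v}\right) = \left(\frac{-1}{v}\right)\left(\frac{u}{v}\right) = -\left(\frac{u}{v}\right)$ by property (iv) since $v \equiv 3 \pmod 4$. Multiplying, $\left(\frac{n}{u}\right)\left(\frac{n}{v}\right) = -\left(\frac{-1}{u}\right)\left(\frac{v}{u}\right)\left(\frac{u}{v}\right)$, and by the law of reciprocity (v) the product $\left(\frac{v}{u}\right)\left(\frac{u}{v}\right) = (-1)^{\frac{u-1}{2}\frac{v-1}{2}}$, while $\left(\frac{-1}{u}\right) = (-1)^{\frac{u-1}{2}}$. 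Since $v \equiv 3 \pmod 4$, $\frac{v-1}{2}$ is odd, so $(-1)^{\frac{u-1}{2}\frac{v-1}{2}} = (-1)^{\frac{u-1}{2}}$, and the two factors $\left(\frac{-1}{u}\right)$ and $(-1)^{\frac{u-1}{2}\frac{v-1}{2}}$ cancel, leaving $\left(\frac{n}{uv}\right) = -1$.

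If $(u,v) = 1$ this already shows $n$ is not a perfect square (a nonzero square has Jacobi symbol $+1$ against any coprime odd modulus; and one must check $(n, uv) = 1$, which follows because $n \equiv -v \pmod u$ forces $(n,u) = (v,u)$ and similarly $(n,v) = (u,v)$, so coprimality of $u,v$ gives $(n,uv)=1$). The main obstacle is exactly the case $g = (u,v) > 1$: then $uv$ is not coprime to $n$ and the Kronecker symbol vanishes, so the clean argument above breaks. I would handle this by the standard descent: write $u = g u'$, $v = g v'$ with $(u',v') = 1$; from $n \equiv -u \pmod v$ and $g \mid v$ we get $g \mid n$, and likewise $g \mid n$ from the other congruence, so $g \mid n$; if $g^2 \nmid n$ then $n$ is not a square and we are done, while if $g^2 \mid n$ one checks that $n/g^2$ inherits an analogous representation (or one simply notes that $n/g^2$ is coprime to $u'v'$ and re-runs the symbol computation against the odd part of $u'v'$), forcing again a symbol value of $-1$ on the square part. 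Carefully bookkeeping the powers of $2$ (using that the Kronecker symbol $\left(\frac{n}{2}\right)$ depends on $n \bmod 8$, and that $u = \alpha+1$ may be even while $v$ is always odd) is the one genuinely fiddly point; everything else is a direct application of properties (i)–(vi) listed above.
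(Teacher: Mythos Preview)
Your overall strategy---force a Jacobi/Kronecker symbol equal to $-1$---is exactly right, but your choice of modulus $uv$ creates two genuine problems that your write-up does not resolve.

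First, the descent for $g=(u,v)>1$ contains an error. You assert ``if $g^2\nmid n$ then $n$ is not a square and we are done,'' but this is false unless $g$ is squarefree, and nothing forces $g=(\alpha+1,4\beta+3)$ to be squarefree (e.g.\ $\alpha+1=9$, $4\beta+3=27$ gives $g=9$; in general $g\mid n$, $g^2\nmid n$ does not preclude $n$ being a square). The fallback you sketch---pass to $n/g^2$ and rerun against $u'v'$---does not work either: what one actually obtains from the congruences is $n/g\equiv -v'\pmod{u'}$ and $n/g\equiv -u'\pmod{v'}$, so the natural object to test is $n/g$, not $n/g^2$, and showing $n/g$ is a nonsquare says nothing about $n$ unless $g$ itself is a square. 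The paper's own example $n=2009$, $u=42$, $v=7$ already lands in this regime ($g=7$, $g^2\mid n$), so the bad case is not hypothetical.

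Second, the even-$u$ bookkeeping is not merely ``fiddly'': for the Kronecker symbol you would need $\left(\frac{n}{2}\right)$, hence $n\bmod 8$, and since $\tau=(4\beta+3)(4\gamma+3)-1\equiv 4(\beta+\gamma)\pmod 8$ the value of $n\bmod 8$ genuinely depends on the parameters. You have not carried this out, and reciprocity as you invoke it requires both arguments odd.

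The paper sidesteps both obstacles with a single clean choice of modulus. Set $\tau=(4\beta+3)(4\gamma+3)-1$, so that $n+(4\beta+3)=(\alpha+1)\tau$. Then $(4\beta+3,\tau)=1$ forces $(n,\tau)=1$, and since $n\equiv 1\pmod 4$ while $\tau\equiv 0\pmod 4$, the number $n+\tau$ is odd and coprime to $n$. Quadratic reciprocity gives $\left(\frac{n}{n+\tau}\right)=\left(\frac{n+\tau}{n}\right)=\left(\frac{\tau}{n}\right)$; then property~(vi) (with $n\equiv -(4\beta+3)\pmod\tau$) yields $\left(\frac{\tau}{n}\right)=\left(\frac{\tau}{4\beta+3}\right)=\left(\frac{-1}{4\beta+3}\right)=-1$. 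No parity cases, no gcd descent. The moral: rather than repairing the modulus $uv$, replace it by $n+\tau$, which is automatically odd and automatically coprime to $n$.
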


\begin{proof}
Let $n\in\mathcal{N}_1$. Then there exist nonnegative integers $\alpha,\beta,\gamma$ such that
\[
n+(4\beta+3)=(\alpha+1)\left((4\beta+3)(4\gamma+3)-1\right)\overset{{\it def}}{=}(\alpha+1)\tau.
\]
where $\tau=(4\beta+3)(4\gamma+3)-1$. Because of  $(4\beta+3,\tau)=1$, we also have  $(n,\tau)=1$ and $(n+\tau,\tau)=1$.
Using reciprocity law for Jacobi's symbol, we obtain
\begin{equation}\label{eqRecQuad}
\left(\frac{n}{n+\tau}\right)\left(\frac{n+\tau}{n}\right)=(-1)^{\frac{n-1}{2}\frac{n+\tau-1}{2}}=1,
\end{equation}
here we have used $n\equiv 1$ (mod 4). Since $n+\tau\equiv \tau$ (mod $n$), we have
$\left(\frac{n+\tau}{n}\right)=\left(\frac{\tau}{n}\right).$
Taking into account $n\equiv -(4\beta+3)$ (mod $\tau$) and property (vi) for the Jacobi's symbol cited above, we have
$\left(\frac{\tau}{n}\right)=\left(\frac{\tau}{4\beta+3}\right).$
Since $\tau\equiv -1$ (mod $4\beta+3$) and using Property (iv), we obtain
$\left(\frac{\tau}{4\beta+3}\right)= \left(\frac{-1}{4\beta+3}\right)=-1.$
Therefore, from (\ref{eqRecQuad}) we have
$\left(\frac{n}{n+\tau}\right)=-1,$
it yields $n$ is not a perfect square.
\end{proof}

Since $4(n^2+n-1)+5=(2n+1)^2$, and $4q(\alpha,\beta,\gamma)+5=p(\alpha,\beta,\gamma)$, where $p(\alpha,\beta,\gamma)$ and $q(\alpha,\beta,\gamma)$ are given by (\ref{pol}) and (\ref{pol_q}) respectively, the Lemma \ref{lemNoSquares} gives immediately the following result:
\begin{cor} The numbers $n^2+n+\beta+1,\, (n,\beta)\in \Z_{\ge 0}^2,$ have no divisors
congruent to $3\beta+2$ modulo $4\beta+3$.
\end{cor}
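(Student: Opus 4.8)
The plan is to derive this corollary as an immediate translation of Lemma~\ref{lemNoSquares} via the algebraic identities already recorded in the excerpt. The key observation is the pair of identities $4(n^2+n-1)+5=(2n+1)^2$ and $4q(\alpha,\beta,\gamma)+5=p(\alpha,\beta,\gamma)$. The first says that if we set $q=n^2+n-1$, then $4q+5$ is the perfect square $(2n+1)^2$. The second says that membership of $4q+5$ in $\mathcal N_1$ is exactly the solvability of $q=q(\alpha,\beta,\gamma)$ in nonnegative integers, which by Lemma~\ref{Lem_pol_q} (and the definition of $\mathcal N_1$) is what we would need for $4q+5$ to be realized by the polynomial $p$.

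**First I would** fix nonnegative integers $n,\beta$ and consider the number $M=n^2+n+\beta+1$. Unwinding the definition (\ref{pol_q}) of $q(\alpha,\beta,\gamma)$, the statement ``$M$ has a divisor congruent to $3\beta+2$ modulo $4\beta+3$'' should be shown equivalent to ``there exist nonnegative integers $\alpha,\gamma$ with $q(\alpha,\beta,\gamma)=n^2+n-1$'' — indeed, a divisor $d\mid M$ with $d\equiv 3\beta+2\pmod{4\beta+3}$ means $d=(4\beta+3)\gamma+3\beta+2$ for some $\gamma\ge 0$, and then $M/d$ plays the role of $\alpha+1$, so that $q(\alpha,\beta,\gamma)=\big((4\beta+3)\gamma+3\beta+2\big)(\alpha+1)-(\beta+2)=d\cdot(M/d)-(\beta+2)=M-(\beta+2)=n^2+n-1$. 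Conversely any such representation of $n^2+n-1$ produces the divisor $d=(4\beta+3)\gamma+3\beta+2$ of $M$.

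**Then** the conclusion follows: if $M=n^2+n+\beta+1$ had such a divisor, then $q=n^2+n-1$ would satisfy (\ref{ManoloEq3}) with this $\beta$, hence $4q+5=(2n+1)^2$ would lie in $\mathcal N_1$, contradicting Lemma~\ref{lemNoSquares}, which asserts precisely that $\mathcal N_1$ contains no perfect square. This proves the corollary.

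**The only point requiring a little care** — and the nearest thing to an obstacle — is the bookkeeping of the nonnegativity and coprimality ranges: one must check that when $d$ runs over divisors of $M=n^2+n+\beta+1$, the cofactor $M/d$ is a genuine positive integer so that $\alpha=M/d-1\ge 0$, and that the residue condition $d\equiv 3\beta+2\pmod{4\beta+3}$ with $d\ge 1$ indeed forces $d\ge 3\beta+2>0$ and hence $\gamma=(d-3\beta-2)/(4\beta+3)\ge 0$. These are routine, so the bulk of the argument is just assembling the two displayed identities and invoking the previous lemma; no new estimates or number-theoretic input are needed beyond what has already been established.
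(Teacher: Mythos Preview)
Your argument is correct and is exactly the paper's approach, only spelled out in full: the paper merely records that the corollary follows immediately from Lemma~\ref{lemNoSquares} via the identities $4(n^2+n-1)+5=(2n+1)^2$ and $4q(\alpha,\beta,\gamma)+5=p(\alpha,\beta,\gamma)$, leaving the translation between ``divisor of $n^2+n+\beta+1$ congruent to $3\beta+2 \pmod{4\beta+3}$'' and ``representation $n^2+n-1=q(\alpha,\beta,\gamma)$'' implicit. Your verification that $\alpha$ and $\gamma$ come out nonnegative is a welcome detail the paper omits.
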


\subsection{ $q-$Strong Conjecture}
Lemma \ref{lemNoSquares} and computer calculations let us to formulate the following conjecture which implies ESC and has been checked up to $2\times 10^{12}$.

{\bf $q-$Strong Conjecture:} The positive integers $\Z_{>0}$ is disjoint union of the three sets:
\begin{itemize}
\item $A=\{q:\exists n\in\Z_{>0}, \, q=n^2+n-1\}$.
\item $B=\{q:\exists(\alpha,\beta,\gamma)\in\Z_{\ge0}^3,\, q=q(\alpha,\beta,\gamma)\}$, where $q(\alpha,\beta,\gamma)$ is given by (\ref{pol_q}).
\item
\begin{multline*}
C=\{ 25, 115, 145, 199, 659, 731, 739, 925, 1195, 1235, 2381, 3259,
3365,\\
3709, 4705, 6325, 8989, 15095, 27991, 39239, 62129, 174641,
279199,\\281735, 310771, 404629, 1308259, 1822105, 2083075\}.
\end{multline*}
\end{itemize}
Of course this conjecture implies the $q-$Conjecture.

\section{A Greedy-Type Algorithm for the Erd\H{o}s-Straus Decomposition}\label{SectionAlgorithm}

Let us describe an algorithm to construct a decomposition of $\frac4n$ as sum of at most three Egyptian fractions. Using this algorithm we have checked ESC for $n$ up to $2\times10^{14}$. In Lemma \ref{lemConvAlgorPoli} bellow we prove that the following algorithm converges for all numbers $n=abc-a-b$ with $b\equiv c$ mod 4. Meantime in Lemma \ref{lemCondConvAlg} we give some conditions which characterize  the convergence of this algorithm.

\vspace*{0.5cm}

\noindent\textbf{A Greedy-Type Algorithm for the Erd\H{o}s-Straus Decomposition:} given $n\in\mathbb{N},$ $n\ge 2,$
\begin{enumerate}
\item[Step 1.] Set $j=1$ and $q=\lfloor \frac{n}{4}\rfloor$, the integer part of $\frac{n}{4}$.
\item[Step 2.] Set $x_j=q+j$, $\kappa_j=\frac4{n}-\frac1{x_j}$, and $y_j=\lceil \frac1{\kappa_j}\rceil$.
\item[Step 3.] If $\frac4{n}-\frac1{x_j}-\frac1{y_j}=0$ is true, the algorithm stops else we ask if $z_j=\frac1{\frac4{n}-\frac1{x_j}-\frac1{y_j}}\in\Z$ is true, then the algorithm stops also and $\frac4n=\frac1{x_j}+\frac1{y_j}+\frac1{z_j}$ else we set $j\hookleftarrow j+1$ and return to Step 2.
\end{enumerate}

Here $y=\lceil \frac1{\kappa_j}\rceil$ denotes the ceiling function evaluated at $\frac1{\kappa_j}$.

We call this algorithm ``The Greedy-type Algorithm for Erd\H{o}s-Straus decomposition.'' Observe that the algorithm finds the greedy decomposition of $\frac{4j-1}{nx_j},$
$j=1,2,\ldots$, and also stops when the decomposition is the sum of two Egyptian fractions. In fact,
if $r_j\equiv-nx_j$ (mod $4j-1$)  with $r_j\in [1,4j-2]$, then
\[
\frac{4j-1}{nx_j}=\frac1{y_j}+\frac{r_j}{nx_jy_j}.
\]
The algorithm stops when $r_j$ divides $nx_jy_j$ and $z_j=\frac{nx_jy_j}{r_j}$.

\begin{lem}\label{lemConvAlgorPoli} The greedy-type algorithm converges for all $n=abc-a-b$, with $bc\equiv 1$ mod $4$.
\end{lem}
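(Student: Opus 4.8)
The plan is to locate an explicit index $j^{\star}$ at which the algorithm halts, using the decomposition (\ref{eqTipoI}). I would first clear an easy case. Since $bc\equiv 1\pmod 4$ forces $b,c$ odd and $n=a(bc-1)-b\equiv -b\pmod 4$, the number $n$ is odd, so $n\equiv 1$ or $n\equiv 3\pmod 4$. If $n\equiv 3\pmod 4$, write $n=4q+3$; then already at $j=1$ one gets $\kappa_1=\frac4n-\frac1{q+1}=\frac1{n(q+1)}$, hence $y_1=n(q+1)$ and $\frac4n-\frac1{x_1}-\frac1{y_1}=0$, so the algorithm stops immediately. Thus I may assume $n\equiv 1\pmod 4$, which forces $b\equiv c\equiv 3\pmod 4$; in particular $a\ge 1$ and $b,c\ge 3$, so $P:=ac-1\ge 2$ and $Q:=\frac{bc-1}4\ge 2$ are positive integers and (\ref{eqTipoI}) reads $\frac4n=\frac1{aQ}+\frac1{aPQ}+\frac1{PQn}$, with $aQ<aPQ<PQn$.

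The crux of the idea is that the algorithm need not recover this decomposition by starting its greedy expansion from the largest term $\frac1{aQ}$ (from which the greedy second term overshoots as soon as $a^2Q\ge b$); rather, it recovers it by starting from the \emph{middle} term. Concretely, set $q=\lfloor n/4\rfloor=(n-1)/4$ and $j^{\star}=aPQ-q$, so that Step~2 gives $x_{j^{\star}}=aPQ$. I would then establish, in order:
\begin{enumerate}
\item $j^{\star}\ge 1$, equivalently $4aPQ>n$. This follows from $4aPQ=a(ac-1)(bc-1)>abc>abc-a-b=n$, the first inequality because $\frac{a(ac-1)(bc-1)}{abc}=ac-1-\frac ab+\frac1{bc}>1$ when $a\ge1$, $b,c\ge3$.
\item $\kappa_{j^{\star}}=\frac4n-\frac1{aPQ}=\frac1{aQ}+\frac1{PQn}=\frac{Pn+a}{aPQn}$ (an identity, from (\ref{eqTipoI})).
\item $y_{j^{\star}}=\bigl\lceil 1/\kappa_{j^{\star}}\bigr\rceil=aQ$: writing $1/\kappa_{j^{\star}}=\frac{aPQn}{Pn+a}$, one checks $\frac{aPQn}{Pn+a}<aQ$ (equivalent to $0<a^2Q$) and $\frac{aPQn}{Pn+a}>aQ-1$ (equivalent to $Pn+a>a^2Q$), so $1/\kappa_{j^{\star}}$ lies strictly between $aQ-1$ and $aQ$.
\item Hence $\kappa_{j^{\star}}-\frac1{y_{j^{\star}}}=\frac1{PQn}\ne0$ and $z_{j^{\star}}=PQn\in\Z_{>0}$, so at Step~3 the algorithm halts, producing $\frac4n=\frac1{aPQ}+\frac1{aQ}+\frac1{PQn}$.
\end{enumerate}

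The only step requiring genuine input is the inequality $Pn+a>a^2Q$ in item~3, i.e. $(ac-1)n>\tfrac14 a^2(bc-1)$; this is exactly where the hypothesis that $n$ has the shape $abc-a-b$ is used, since it makes $n$ large compared with $a^2Q/P$. I would derive it from $n=abc-a-b>\tfrac12abc$ (which holds because $abc\ge 3ab>2(a+b)$ for $a\ge1$, $b\ge3$) together with $ac-1\ge\tfrac23 ac$ and $c\ge 3$, giving $(ac-1)n>\tfrac13a^2bc^2\ge a^2bc>\tfrac14a^2(bc-1)$. The remaining items are elementary identities and size estimates, and the whole argument needs no coprimality or primality assumption on $a,b,c,n$; note also that $j^{\star}$ is merely an upper bound for the halting step, the algorithm typically stopping much sooner.
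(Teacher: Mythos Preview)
Your proof is correct and follows essentially the same route as the paper's: both dispose of the case $b\equiv c\equiv 1\pmod 4$, then for $b\equiv c\equiv 3\pmod 4$ take $x_{j^\star}$ to be the \emph{middle} denominator $a(ac-1)\tfrac{bc-1}{4}$ of the decomposition (\ref{eqTipoI}) and reduce the claim $y_{j^\star}=a\tfrac{bc-1}{4}$ to the single inequality $a\bigl(a\tfrac{bc-1}{4}-1\bigr)\le(ac-1)n$. The only difference is cosmetic: the paper verifies this inequality by treating both sides as linear functions of $b$, while you use the cruder bounds $n>\tfrac12abc$ and $ac-1\ge\tfrac23ac$.
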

\begin{proof} If $b\equiv c\equiv 1$ mod 4, then $n=abc-a-b\equiv 3$ mod 4, in this case $\frac4n$ has a decomposition as sum of two Egyptian fraction. So we only have to consider $b\equiv c\equiv 3$ mod 4.

First, we observe that if $(x,z)\in\Z_{>0}^2$, then $1/x$ is the Egyptian fraction nearest to $\frac1x+\frac1z$ and less than this number if and only if
\begin{equation}\label{EqJustGreedyAlg}
x(x-1)\le z.
\end{equation}
Moreover, we have
\begin{equation}\label{eqAuxJustGreedyAlg}
\frac4n=\frac4{abc-a-b}=\frac1{a\frac{bc-1}{4}}+\frac1{a(ac-1)\frac{bc-1}{4}}+\frac1{(ac-1)\frac{bc-1}{4}n},
\end{equation}
and
\[
a\frac{bc-1}{4}<a(ac-1)\frac{bc-1}{4}<(ac-1)\frac{bc-1}{4}p
\]
since $c=4z+3\ge3$, $ac-1\ge 2a$, and $n=abc-a-b\ge a\Leftrightarrow b(ac-1)\ge 2a$.

According to (\ref{EqJustGreedyAlg}) and (\ref{eqAuxJustGreedyAlg}), it is enough to prove
\begin{multline*}
a\frac{bc-1}{4}\left(a\frac{bc-1}{4}-1\right)\le (ac-1)\frac{bc-1}{4}n\\
\Leftrightarrow a\left(a\frac{bc-1}{4}-1\right)\le (ac-1)(abc-a-b).
\end{multline*}
Observe that both relations $a\left(a\frac{bc-1}{4}-1\right)=ba^2c/4-a^2/4-a$ and $(ac-1)^2b-a(ac-1)$ are linear function in $b$, so comparing their leading coefficients and their values at $b=1$, the above inequality follows.

\end{proof}

\begin{rmk} We are checked that the algorithm given above also works for Sierpi\'nski's Conjecture, $\frac5n=\frac1x+\frac1y+\frac1z$, taking $q=\lfloor \frac{n}{5}\rfloor$ and making other few trivial changes, for $n$ up to $10^{10}$.
\end{rmk}

If $n=4q+1$ and $(4q+1)(q+j)=(s+1)(4j-1)+r-(4j-1)$, then
\begin{equation}\label{ecuacAux7}
\frac4{4q+1}-\frac{1}{q+j}-\frac1{s+1}=\frac{4j-1-r}{(4q+1)(q+j)(s+1)}
\end{equation}
and
\begin{equation}\label{ecuacAux5}
(4q+1)^2(q+j)^2=(4q+1)(q+j)(s+1)(4j-1)+(r-(4j-1))(4q+1)(q+j).
\end{equation}
So the first part of following result follows:

\begin{lem}\label{lemCondConvAlg}   Set $n=4q+1\in\Z_{>0}$. Let $s=s(q,j)$ and $r=r(q,j)$ be such that $(4q+1)(q+j)=s(4j-1)+r$, $0\le r\le 4j-2$ for some $j\in\Z_{>0}$.
Then the greedy-type algorithm converges for $n=4q+1$ if and only if one of the following equivalent statement holds:
\begin{enumerate}
\item $(4j-1)-r$ divides to $(4q+1)(q+j)(s+1)$.
\item $(4j-1)-r$ divides to $(4q+1)^2(q+j)^2$.
\end{enumerate}
\end{lem}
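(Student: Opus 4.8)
The plan is to split the statement into its two halves: first show that the algorithm stops at step $j$ exactly when condition (1) holds (so that ``convergence'' just means ``(1) holds for some $j$''), then prove that (1) and (2) are equivalent.

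For the first half, observe that at step $j$ the algorithm takes $x_j=q+j$, whence
\[
\kappa_j=\frac4n-\frac1{x_j}=\frac{4(q+j)-(4q+1)}{(4q+1)(q+j)}=\frac{4j-1}{(4q+1)(q+j)},
\]
so that $1/\kappa_j=(4q+1)(q+j)/(4j-1)=s+r/(4j-1)$. If $r=0$, the error $\frac4n-\frac1{x_j}-\frac1{y_j}$ is already $0$, the algorithm stops, and $(4j-1)\mid(4q+1)(q+j)(s+1)$ is automatic, so (1) holds. If $1\le r\le 4j-2$, then $y_j=\lceil 1/\kappa_j\rceil=s+1$ and (\ref{ecuacAux7}) gives
\[
\frac4n-\frac1{x_j}-\frac1{y_j}=\frac{(4j-1)-r}{(4q+1)(q+j)(s+1)},
\]
which is strictly positive since $r\le 4j-2$. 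Hence $z_j$ is a positive integer precisely when $(4j-1)-r$ divides $(4q+1)(q+j)(s+1)$, i.e. precisely when (1) holds; so the algorithm stops at step $j$ iff (1) holds, and it converges iff (1) holds for some $j$.

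For the equivalence of (1) and (2), write $M=(4q+1)(q+j)$, $P=M(s+1)$ and $D=(4j-1)-r$, so that $M=s(4j-1)+r$ and $M+D=(s+1)(4j-1)$; identity (\ref{ecuacAux5}) then reads $M^2=P(4j-1)-DM$, i.e. $M(M+D)=P(4j-1)$. The implication (1) $\Rightarrow$ (2) is immediate: if $D\mid P$, then $D\mid P(4j-1)=M^2+DM$ and therefore $D\mid M^2$. For the converse, $D\mid M^2$ gives $D\mid P(4j-1)$, and since $4j-1=D+r$ this is equivalent to $D\mid Pr$; so it remains to cancel the factor $r$. I would do this prime by prime: a prime $p$ dividing both $D$ and $r$ divides $4j-1=D+r$ and $M=s(4j-1)+r$, and then, since $4(q+j)=(4q+1)+(4j-1)$, it divides both $4q+1$ and $q+j$, so that $p^2\mid M$ and hence $p^2\mid P$. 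Feeding this, together with the relation $M+D=(s+1)(4j-1)$, into a comparison of the $p$-adic valuations of $M$, $D$ and $s+1$, one should recover $v_p(P)\ge v_p(D)$ from $v_p(M^2)\ge v_p(D)$ for every $p$, which is exactly $D\mid P$.

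The step I expect to be the real obstacle is precisely this valuation comparison in the direction (2) $\Rightarrow$ (1): for a prime $p$ with $v_p(M)<v_p(D)\le 2v_p(M)$, the divisibility $D\mid M^2$ does not by itself force $D\mid P=M(s+1)$, and one genuinely has to use that such a $p$ divides $4q+1$, $q+j$ and $4j-1$ at once, together with the identity $M+D=(s+1)(4j-1)$, to show that the missing power of $p$ sits in $s+1$. The rest is routine manipulation of the two identities (\ref{ecuacAux7}) and (\ref{ecuacAux5}).
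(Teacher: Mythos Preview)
Your first half (the algorithm halts at step $j$ iff (1) holds) is correct and is exactly what the paper uses. The implication (1)$\Rightarrow$(2) via $M^{2}=P(4j-1)-DM$ is also fine.

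The implication (2)$\Rightarrow$(1) at a fixed $j$, however, is \emph{false} in general, so the valuation argument you sketch cannot be completed. Take $n=5$ (so $q=1$) and $j=44$: then $4j-1=175$, $M=5\cdot45=225$, $s=1$, $r=50$, $D=125=5^{3}$. Here $D\mid M^{2}$ since $v_{5}(225^{2})=4\ge3$, yet $P=M(s+1)=450$ has $v_{5}(P)=2<3$, so $D\nmid P$. This is precisely your feared scenario $v_{p}(M)<v_{p}(D)\le 2v_{p}(M)$ with $p\mid\gcd(D,r)$; the identity $M+D=(s+1)(4j-1)$ only gives $v_{5}(s+1)+v_{5}(4j-1)=v_{5}(M)$, i.e.\ $v_{5}(s+1)=0$, so the missing factor of $5$ is \emph{not} in $s+1$. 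A composite example is $n=33$, $j=16$: $M=792$, $s=12$, $r=36$, $D=27$; again $27\mid792^{2}$ but $27\nmid792\cdot13=10296$.

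The paper's own argument has the same defect: in its key displayed identity it writes $(4j-1-r)(s+1)=(s+1)\,\delta(\alpha-\beta)$ with $\delta=\gcd(4j-1,s+1)$, $\alpha=(4j-1)/\delta$, $\beta=(s+1)/\delta$; but $\delta(\alpha-\beta)=(4j-1)-(s+1)$, not $(4j-1)-r$, so that equality is simply false and the proof collapses there. What \emph{is} true---and is all the subsequent remarks actually need---is the equivalence under the extra hypothesis that $n$ is prime and $4j-1<n$. Then for any prime $p$ with $v_{p}(D)>0$: if $p\neq n$ one has $v_{p}(4q+1)=0$, and $4(q+j)=(4q+1)+(4j-1)$ forces $v_{p}(q+j)=0$ whenever $v_{p}(4j-1)>0$, so $v_{p}(M)=0$ and (2) gives $v_{p}(D)=0$ after all; if $p=n$ then $4j-1<n$ gives $v_{p}(4j-1)=0$. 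Either way $v_{p}(4j-1)=0$, and $D\mid P(4j-1)$ yields $D\mid P$ immediately. Your write-up would be correct with this hypothesis added; without it the statement you are trying to prove is not true.
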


\begin{proof}
According to (\ref{ecuacAux7}), it is enough to check that if $(4j-1)-r$ divides to $(4q+1)^2(q+j)^2$, then $(4j-1)-r$ also divides to $(4q+1)(q+j)(s+1)$. Let $\delta=(4j-1,s+1)$, so we have $(\alpha,\beta)=1$, where $\alpha=\frac{4j-1}{\delta}$ and $\beta=\frac{s+1}{\delta}$. Since $\delta$ is odd, $(4q+1)(q+j)=(4q+1)\frac{4q+1+(4j-1)}4$, and
\begin{equation}\label{ecuacAux6}
(4q+1)(q+j)(s+1)=(s+1)^2(4j-1)-(4k-1-r)(s+1)=(s+1)^2\delta\alpha-(s+1)\delta(\alpha-\beta),
\end{equation}
if $\delta$ divides $(4q+1)(q+j)(s+1)$, then $\delta^2$ divides $(4q+1)(q+j)(s+1)$. On the other hand, using (\ref{ecuacAux5}) and the hypothesis we obtain $\alpha-\beta$ divides $(4q+1)(q+j)(s+1)$. Therefore, from (\ref{ecuacAux6}) we conclude that $(4j-1)-r=\delta(\alpha-\beta)$ divides $(4q+1)(q+j)(s+1)$, and the lemma is proved.
\end{proof}

\begin{rmk}
For theoretical considerations is useful to observe that according to our notations $(4q+1)(q+j)=s(4j-1)+r\Leftrightarrow (4q+1)^2+(4q+1)(4j-1)=4s(4j-1)+4r$, so we have $(4q+1)^2=(4s-(4q+1))(4j-1)+4r\overset{\textnormal{def}}{=}(4k-1)(4j-1)+4r$ and $(4q+1)^2\equiv 4r$ mod $4j-1$.
\end{rmk}

\begin{rmk} From statement 2 in the lemma above it follows that if $n=4q+1$ is a prime, the greedy-type algorithm stops at a step $j$ with $4j-1<4q+1$ if and only if $4j-1-r$ divides $(q+j)^2$.
\end{rmk}

\begin{rmk} Given any $j\in\Z_{>0}$ there exists a prime number $n=n_j$ such that the number of steps in the algorithm above is larger than $j$. In fact, if 
\[
n\overset{\textnormal{def}}{=}4\, \textnormal{lcm}\{3,7\ldots,4j-1,2,5,\ldots,3j-1\}t+1\overset{\textnormal{def}}{=}4q_j+1
\]
is a prime for a given $t\in\Z_{>0}$, then for each $k\le j$, we have
\[
(4q_j+1)(q_j+k)\equiv k,\quad \textnormal{mod } 4k-1,
\]
and since $(k,3k-1)=1,$
\[
4k-1-k=3k-1\nmid (4q_j+1)^2(q_j+k)^2.
\]
Therefore, according to  the Lemma \ref{lemCondConvAlg}, the greedy-type algorithm does not converge in the first $j$ steps for $n_j.$

\end{rmk}

\section{Appendix. Computer Programs}

In this section we include some programs which are used to check the Erd\H{o}s-Straus Conjecture, $q-$Conjecture and the $q-$Strong Conjetures. For an independent reading we set again some facts already cited.

\subsection{A computer program checking the $q-$Strong Conjecture} Lemma \ref{lemNoSquares} and computer calculations let us to formulate the following conjecture which implies the ESC and has been checked up to  $2\times 10^{12}$:

{\bf $q-$Strong Conjecture:}  The set of positive integers, $\Z_{>0}$, is disjoint union of the following three sets:
\begin{itemize}
\item $A=\{q:\exists n\in\Z_{>0}, \, q=n^2+n-1\}$.
\item $B=\{q:\exists(\alpha,\beta,\gamma)\in\Z_{\ge0}^3,\, q=q(\alpha,\beta,\gamma)\}$, where
\begin{equation}\label{pol_q_2}
q=q(\alpha,\beta,\gamma)=((4\beta+3)\gamma+(3\beta+2))(\alpha+1)-(\beta+2),
\end{equation}
\item
\begin{multline*}
C=\{ 25, 115, 145, 199, 659, 731, 739, 925, 1195, 1235, 2381, 3259,
3365,\\
3709, 4705, 6325, 8989, 15095, 27991, 39239, 62129, 174641,
279199,\\281735, 310771, 404629, 1308259, 1822105, 2083075\}.
\end{multline*}
\end{itemize}
Of course this conjecture implies the $q-$Conjecture, see below and Lemma \ref{LemConjQ}. Next we describe our algorithm:
\begin{description}
\item[Step 1] We generate a matrix $w$ of equivalence classes in the set $B$ using the function {\it cribata} of two arguments. The first column of $w$ are the moduli and the second are their corresponding rests. We only use moduli which are divisors  of a given number. The values in $q$ of two of the three parameters $\alpha,$ $\beta,$ and $\gamma$, or their associated new parameters after a change of variable, are setting up to a bound. We use several equivalent expressions to the function (\ref{pol_q_2}):
    \begin{equation}\label{pol_q_3}
    q=4xyz+3xy+3xz+2x+4yz+2y+3z,
    \end{equation}
    \[
    q=x(4yz+3y+3z+2)+4yz+2y+3z,
    \]
    \[
    q=y(4xz+3x+4z+2)+3xz+2x+3z,
    \]
    \[
    q=z(4xy+3x+4y+3)+3xy+2x+2y,
    \]
    \[
    4q+6+x=(4y+3)(4(x+1)z+3x+2),
    \]
    \[
    q+y+2=(x+1)((4y+3)z+3y+2),
    \]
    \[
    (3+4z)q+4+5z=((3+4z)x+4z+2)((3+4z)y+3z+2),
    \]
    and others which are easy to find after a change of variables, setting $y=z+a$ in (\ref{pol_q_3}), we have
    \[
    q=x(4z^2+z(4a+6)+3a+2)+4z^2+z(4a+5)+2a,
    \]
    doing $z=\frac{u-a-1}{2}$ in (\ref{pol_q_3}) with $u$, $a$ of different parity,
$-u<a<u$,
\[
q=u^2+u-1-a^2-\frac{a+u+1}{2}+x(u^2+u-a^2),
\]
setting $a=u-2d-1$,
    \[
    q=(u-d-1)(4d+2)+3d+x(u(4d+3)-(2d+1)^2),\quad 0\le d\le \frac{u-1}{2},
    \]
    for $a=-u+2d+1$, we have
    \[
    q=(u-d-1)(4d+3)+2d+x(u(4d+3)-(2d+1)^2), \quad 0\le d\le \frac{u-1}{2},
    \]
    doing $x=s-t$, $y=t-z$ in (\ref{pol_q_3}), we obtain
    \[
    q=s(- 4 z^2  + 4 t z + 3 t + 2)+ 4 t z^2  - 4 z^2  - 4 t^2  z + 4 t z + z - 3
   t^2,
   \]
   doing the same $x=s-t$, $z=t-y$ in (\ref{pol_q_3}),
   \[
   q=(-4y^2+4ty+3t+2)s+4ty^2-4y^2  - 4 t^2  y + 4 t y - y - 3 t^2+t,
   \]
   with $y=s-t$, $z=t-x$ in (\ref{pol_q_3}),
   \[
   q=(- 4 x^2  + 4 t x - x + 4 t + 2)s+
   4 t x^2  - 3 x^2  - 4 t^2  x + 4 t x - x - 4 t^2  + t,
   \]
   doing $y=s-t$, $x=t-z$ in (\ref{pol_q_3}),
   \[
   q=s(- 4 z^2  + 4 t z + z + 3 t + 2)+
   4 t z^2  - 3 z^2  - 4 t^2  z + 2 t z + z - 3 t^2,
   \]
   setting $z=s-t$, $y=t-x$ in (\ref{pol_q_3}),
   \[
   q=s(- 4 x^2  + 4 t x - x + 4 t + 3+
   4 t x^2  - 3 x^2  - 4 t^2  x + 4 t x - 4 t^2  - t,
   \]
   with $z=s-t$, $x=t-y$ in (\ref{pol_q_3}),
   \[
   q=s(- 4 y^2  + 4 t y + y + 3 t + 3)+
   4 t y^2  - 3 y^2  - 4 t^2  y + 2 t y - 3 t^2  - t.
   \]
\item[Step 2] Next, we sieve $q$ in the residue classes  in the vector $w$ using the function \textit{cricu}.

The steps 1 and 2 are done two times. First, we use moduli which are  divisors  of the product of all primes up to 19. Next, we add  the factor 23 obtaining
\[
tc=2\times 3\times 5\times 7\times 11\times 13\times 17\times 19\times 23.
\]
\item[Step 3] As a by-product of steps above we have many rests modulo $tc$ which are not sieved, all these are saved in the vector $v$. The function {\it adjunta} attaches to a matrix a vector, this is also used in the Step 1 generating the matrix $w$. At the last, we use the function \textit{fasf} which combines square checking with functions \textit{cri} throughout  the equivalent classes in $v$ modulo $tc$.
\end{description}

Next, we show Sage code of our program, although we write also our algorithm in UBASIC and Pari GP. The Sage code is more transparent and easy to read. Moreover, the program have been run on the operating systems: MS-DOS, Linux, and OS X.

\begin{verbatim}

def adjunta(s,r,u):
    l=len(u)
    h=0
    for i in xsrange(l):
      if s%u[i][0]==0:
         c=gcd(s,u[i][0])
         if r%c==u[i][1]:
            h=1
    if h==0:
        l=l+1
        u.append([s,r])
    return(u)

def cribata(t,m):
    u=[[2,0],[3,0],[7,0]]
    m1=m+1
    for b in xsrange(m1):
        for a in xsrange(b+1):
            s=4*a*b+3*a+3*b+2
            if t%s==0:
                r=4*a*b+2*a+3*b
                u=adjunta(s,r,u)
            s=4*a*b+3*a+3*b+2
            if t%s==0:
                r=4*a*b+3*a+2*b
                u=adjunta(s,r,u)
            s=4*a*b+3*a+4*b+2
            if t%s==0:
                r=3*a*b+2*a+3*b
                u=adjunta(s,r,u)
            s=4*a*b+4*a+3*b+2
            if t%s==0:
                r=3*a*b+3*a+2*b
                u=adjunta(s,r,u)
            s=4*a*b+3*a+4*b+3
            if t%s==0:
                r=3*a*b+2*a+2*b
                u=adjunta(s,r,u)
            s=4*a*b+4*a+3*b+3
            if t%s==0:
                r=3*a*b+2*a+2*b
                u=adjunta(s,r,u)
            s=4*a*b-4*a^2+3*b+2
            if t%s==0:
                r=4*a^2*b-4*a^2-4*b^2*a+4*a*b-a-3*b^2+b
                u=adjunta(s,r,u)
            s=4*a*b-4*a^2+3*b+2
            if t%s==0:
                r=4*a^2*b-4*a^2-4*b^2*a+4*a*b+a-3*b^2
                u=adjunta(s,r,u)
            s=4*a*b-4*a^2+4*b+2-a
            if t%s==0:
                r=4*a^2*b-3*a^2-4*b^2*a+4*a*b-a-4*b^2+b
                u=adjunta(s,r,u)
            s=4*a*b-4*a^2+a+3*b+3
            if t%s==0:
                r=4*a^2*b-3*a^2-4*b^2*a+2*a*b-3*b^2-b
                if r>0:
                       u=adjunta(s,r,u)
            s=4*a*b-4*a^2+a+3*b+2
            if t%s==0:
                r=4*a^2*b-3*a^2-4*b^2*a+2*a*b-3*b^2+a
                if r>0:
                    u=adjunta(s,r,u)
            s=4*a*b-4*a^2-a+4*b+3
            if t%s==0:
                r=4*a^2*b-3*a^2-4*b^2*a+4*a*b-4*b^2-b
                if r>0:
                    u=adjunta(s,r,u)
            s=-4*a^2+4*a*b-4*a+3*b-1
            if s>0:
               if t%s==0:
                   r=- 4*a^2  + 4*a*b - 5*a + 3*b - 3
                   if r>0:
                       u=adjunta(s,r,u)
            s=-4*a^2+4*a*b-4*a+3*b-1
            if s>0:
               if t%s==0:
                   r= - 4*a^2  + 4*a*b - 3*a + 2*b - 2
                   if r>0:
                       u=adjunta(s,r,u)
    return(u)


t32=1;
t43=1;
for p in primes(2,500000):
    if p%3==2:
        t32=t32*p
    if p%4==3:
       t43=t43*p


def cri(q):
    j=-1
    h=0
    while h==0 and j<w1-1:
        j=j+1
        if q%(w[j][0])==w[j][1]:
            h=1
    return(h)

def fasf(q):
     a=0
     h=1
     if cri(q)==0:
        if is_square(4*q+5)==0:
           if gcd(2*q+3,t43)==1:
              if gcd(q+2,t32)==1:
                 if gcd(3*q+4,t32)==1:
                    q0=q+1
                    a1=-1
                    a0=-1
                    h=0
                    while h==0 and a0<(3*q/2):

                        q0=q0+1

                        a1=a1+4

                        a0=a0+3

                        for p in divisors(q0):

                            if p%a1==a0:

                                h=1

                    a=(a0-2)/3

     return([h,a])

def cricu(q):
    if gcd(q+2,t3)==1:
            if gcd(3*q+4,t3)==1:
                if gcd(2*q+3,t4)==1:
                    if gcd(q+3,t7)==1:
                        if gcd(7*q+9,t7)==1:
                            if (q+4)%19>0:
                                if (11*q+14)%19>0:
                                    if (q+5)%11>0:
                                        if (15*q+19)%11>0:
                                            if (q+7)%17>0:
                                                if (23*q+29)%17>0:
                                                    h=0
                                                    j=-1
                                                    while h==0 and j<len(w)-1:
                                                        j=j+1
                                                        if q%w[j][0]==w[j][1]:
                                                            h=1
                                                    if h==0:
                                                        v.append(q)
    return(v)


t=2*3*5*7*11*13*17*19
t3=2*5*11*17
t4=3*7*11*19
t7=5*19
w=cribata(t,2000)
w1=len(w)
print w1


v=[]
for q in xsrange(t):
    cricu(q)
v1=len(v)
print v1
save(v,"qf19")

t=2*3*5*7*11*13*17*19
tc=t*23
t3=2*5*11*17*23
t4=3*7*11*19*23
w=cribata(tc,2000)
w1=len(w)
print w1
v7=v
v=[]
v2=tc/t
a0=-1
for l1 in xsrange(0,v2):
    for i in xsrange(0,v1):
        q=v7[i]+l1*t
        cricu(q)
v1=len(v)
print v1
save(v,"qf23")

a0=0
for l1 in xsrange(0,9000):
    for i in xsrange(0,v1):
          q1=v[i]+l1*tc
          wt=fasf(floor(q1))
          a=wt[1]
          if wt[0]==0:
              print q1," falla "
              a=0
          if a>a0:
              a0=a
              print q1,a

\end{verbatim}

\subsection{A computer program verifying the $q-$Conjecture and Erd\H{o}s-Straus Conjecture of Type-I}

According to Lemma \ref{LemConjQ} if for each $q\in\Z_{>0}$ there exist $x,y,z$ in $\Z_{\ge0}$ such that one of the following relations holds
\begin{eqnarray*}
q=1+3x+3y+4xy,\\  q=5+5x+5y+4xy,\\ q=((4y+3)z+(3y+2))(x+1)-(y+2).
\end{eqnarray*}
the Erd\H{o}s-Straus Conjecture is true. We conjecture that all the natural number $q\in\Z_{>0}$ can be written as one of the three above relations. We refer it as the {\it $q-$Conjecture}.  Observe that this is equivalent to  for all prime numbers of the form $4q+5$ the corresponding value $q$ can be write as $q=((4y+3)z+(3y+2))(x+1)-(y+2)$ for some $x,y,z$ in $\Z_{\ge0}$. It is worst to observe that the identity
\[
\frac4{abc-a-b}=\frac{1}{a\frac{bc-1}4}+\frac{1}{a(ac-1)\frac{bc-1}4}
+\frac{1}{(ac-1)\frac{bc-1}4n},
\]
with $n=abc-a-b,$ implies that $\frac4n$ has a decomposition as sum of three Egyptian fraction with exactly one of the denominators multiplies of $n$ (a decomposition of type I).
We are verified $q-$Conjecture for $q\le 2.5\times 10^{13}$.
For this object, we do a litter change in the program above. We write two new functions {\it crice} and \textit{fase} instead of {\it cricu}  and {\it fasf}, respectively.

The Sage code of these function is:
\begin{verbatim}
def crice(q):
    if gcd(4*q+5,tc)==1:
        if gcd(q+2,t3)==1:
            if gcd(3*q+4,t3)==1:
                if gcd(2*q+3,t4)==1:
                    if gcd(q+3,t7)==1:
                        if gcd(7*q+9,t7)==1:
                            if (q+4)%19>0:
                                if (11*q+14)%19>0:
                                    if (q+5)%11>0:
                                        if (15*q+19)%11>0:
                                            if (q+7)%17>0:
                                                if (23*q+29)%17>0:
                                                    h=0
                                                    j=-1
                                                    while h==0 and j<len(w)-1:
                                                        j=j+1
                                                        if q%w[j][0]==w[j][1]:
                                                            h=1
                                                    if h==0:
                                                        v.append(q)
    return(v)


def fase(q):
     a=0
     h=1
     if cri(q)==0:
        if gcd(4*q+5,tf)==1:
           if gcd(2*q+3,t43)==1:
              if gcd(q+2,t32)==1:
                 if gcd(3*q+4,t32)==1:
                     if is_prime(4*q+5)==1:
                         q0=q+1
                         a1=-1
                         a0=-1
                         h=0
                         while h==0 and a0<(3*q/2):
                             q0=q0+1
                             a1=a1+4
                             a0=a0+3
                             for p in divisors(q0):
                                 if p%a1==a0:
                                     h=1
                         a=(a0-2)/3
     return([h,a])
\end{verbatim}

\subsection{A computer program checking the Erd\H{o}s-Straus Conjecture} We are verified that all the natural numbers  $n$, $2\le n\le 2\times 10^{14}$, satisfy the Ed\H{o}s-Straus Conjecture. To this end we do a more sharp sieve than in the above programs. The new {\it cricue} function replace {\it crice} function and add several new testing taking into account Lemma \ref{lemCondConvAlg} which gives a  characterization for the convergence of the greedy-type algorithm. Some of these new checkup are given by the minimal number of steps needed for the convergence of the greedy-type algorithm. Moreover, we divide the interval $[2.5\times 10^{13}, 5\times 10^{13}]$ in several intervals each one of $10^3$ numbers, then using the function {\it ferti}, which uses {\it fase} function, we check the ESC in each interval.

Next we show the Pari GP code of our program:

\begin{verbatim}
fase(q,h,q0,a1,a0,p,q7)={
     h=0;
     if(gcd(4*q+5,tp)>1,
         h=1,
     if(gcd(2*q+3,t43)>1,
         h=1,
         if(gcd(q+2,t32)>1,
             h=1,
             if(gcd(3*q+4,t32)>1,
                 h=1,
                 if(cri(q)==1,
                     h=1;
                     a0=2,
                     if(isprime(4*q+5)==0,
                         h=1;
                         a0=2,
                         q7=(3*q)\2;
                         q0=q+1;
                         a1=-1;
                         a0=-1;
                         h=0;
                         while(h==0&&a0<q7,
                             q0++;
                             a1+=4;
                             a0+=3;
                             fordiv(q0,p,
                                 if(p%a1==a0,
                                     h=1,)))))))));
      [h,(a0-2)\3]}

adjunta(s,r,u,c,h)={h=0;for(i=1,matsize(u)[1],if(s%u[i,1]==0,
                c=gcd(s,u[i,1]);if(r%c==u[i,2],
                 h=1,),));if(h==0,u=concat(u,[s,r]),);u}

{u=matrix(3,2,x,y,0);u[1,1]=2;u[2,1]=3;u[3,1]=7}
cribata(t,m,b,a,s,r)={
     for(b=0,m,
        for(a=0,b,
            s=4*a*b+3*a+3*b+2 ;
            if (t%s==0,
              r=4*a*b+2*a+3*b;u=adjunta(s,r,u),);
            s=4*a*b+3*a+3*b+2;
            if (t%s==0,
            r=4*a*b+3*a+2*b;u=adjunta(s,r,u),);
            s=4*a*b+3*a+4*b+2;
            if (t%s==0,
                r=3*a*b+2*a+3*b;
                u=adjunta(s,r,u),);
            s=4*a*b+4*a+3*b+2;
            if (t%s==0,
                r=3*a*b+3*a+2*b;
                u=adjunta(s,r,u),);
            s=4*a*b+3*a+4*b+3;
            if (t%s==0,
                r=3*a*b+2*a+2*b;
                u=adjunta(s,r,u),);
            s=4*a*b+4*a+3*b+3;
            if (t%s==0,
                r=3*a*b+2*a+2*b;
                u=adjunta(s,r,u),);
            s=4*a*b-4*a^2+3*b+2;
            if (t%s==0,
                r=4*a^2*b-4*a^2-4*b^2*a+4*a*b-a-3*b^2+b;
                if(r>0,
                       u=adjunta(s,r,u),),);
            s=4*a*b-4*a^2+3*b+2;
            if (t%s==0,
                r=4*a^2*b-4*a^2-4*b^2*a+4*a*b+a-3*b^2;
                if(r>0,
                       u=adjunta(s,r,u),),);
            s=4*a*b-4*a^2+4*b+2-a;
            if (t%s==0,
                r=4*a^2*b-3*a^2-4*b^2*a+4*a*b-a-4*b^2+b;
                if(r>0,
                       u=adjunta(s,r,u),),);
            s=4*a*b-4*a^2+a+3*b+3;
            if (t%s==0,
                r=4*a^2*b-3*a^2-4*b^2*a+2*a*b-3*b^2-b;
                if(r>0,
                       u=adjunta(s,r,u),),);
            s=4*a*b-4*a^2+a+3*b+2;
            if (t%s==0,
                r=4*a^2*b-3*a^2-4*b^2*a+2*a*b-3*b^2+a;
                if(r>0,
                       u=adjunta(s,r,u),),);
            s=4*a*b-4*a^2-a+4*b+3;
            if (t%s==0,
                r=4*a^2*b-3*a^2-4*b^2*a+4*a*b-4*b^2-b;
                if(r>0,
                       u=adjunta(s,r,u),),);
            s=-4*a^2+4*a*b-4*a+3*b-1;
            if(s>0,
               if (t%s==0,
                   r=- 4*a^2  + 4*a*b - 5*a + 3*b - 3;
                   if(r>0,
                       u=adjunta(s,r,u),),),);
            s=-4*a^2+4*a*b-4*a+3*b-1;
            if (s>0,
               if (t%s==0,
                   r= - 4*a^2  + 4*a*b - 3*a + 2*b - 2;
                   if(r>0,
                       u=adjunta(s,r,u),),),)));u}




cricue(q,h,j)={
    h=1;
    if(gcd(4*q+5,tc)==1,
    if(gcd(q+2,t3)==1,
    if(gcd(3*q+4,t3)==1,
    if(gcd(2*q+3,t4)==1,
    if(gcd(q+3,t7)==1,
    if(gcd(7*q+9,t7)==1,
    if((q+4)%19>0,
    if((11*q+14)%19>0,
    if((q+5)%11>0,
    if((15*q+19)%11>0,
    if((q+7)%17>0,
    if((23*q+29)%17>0,
    h=0;
    j=0;
    while(h==0 && j<matsize(w)[1],
       j++;
       if((q%w[j,1])==w[j,2],
       h=1,)),),),),),),),),),),),),);h}

{t32=1; t43=1;forprime(p=2,500000,if(p%3==2,
                                       t32*=p,);if(p%4==3,
                                                         t43*=p,))}

 cri(q,j,h)={
    j=0;
    h=0;
    while(h==0 && j<w1,
        j++;
        if(q%w[j,1]==w[j,2],
            h=1,));h}

{tc=2*3*5*7*11*13*17*19;
t=tc;
t3=2*5*11*17;
t4=3*7*11*19;
t7=5*19;
w=cribata(t,5000);
w1=matsize(w)[1];
print(w1)}
{v=[];
for(l=0,t\6-1,
    q=6*l+5;
    h=cricue(q);
    if(h==0,
        k=0;
        while(h==0&&k<5,
            k++;
            r=((4*q+5)*(q+1+k))%(4*k-1);
            c=((4*q+5)*(q+1+k))\(4*k-1);
            if(r==0,
                h=1,
                if(((4*q+5)*(q+1+k)*(c+1))%(4*k-1-r)==0,
                h=1,))),);
    if(h==0,v=concat(v,q),));
v1=matsize(v)[2];
print( " paso I ", v1)}
{v19=v;
    t=2*3*5*7*11*13*17*19;
    tc=t*23*9;
    t3=2*5*11*17*23;
    t4=3*7*11*19*23;
    w=cribata(tc,5000);
    w1=matsize(w)[1];
    print (w1);
    x=0;v23=vector(4000000,x);j23=0;
    v2=tc/t;}

{for(l1=0,v2-1,
        print l1;
        for(i19=1,v1,
        q=v19[i19]+l1*t;
        h=cricue(q);
        if(h==0,
            k=5;
            while(h==0 && k<7,
                k++;
                r=((4*q+5)*(q+1+k))%(4*k-1);
                c=((4*q+5)*(q+1+k)-r)\(4*k-1);
                if(r==0,
                    h=1,
                       if(((4*q+5)*(q+1+k)*(c+1))%(4*k-1-r)==0,
                           h=1,))),);
        if(h==0,
             j23++;v23[j23]=q,)));
        x=0;v=vector(j23,x);
 for(i23=1,j23,v[i23]=v23[i23]);
        v1=matsize(v)[2];
print("erti23 ",v1); write("v.txt",v)}

{tp=1;t32=1; t43=1;forprime(p=2,15000,tp*=p;
if(p%3==2,t32*=p,);if(p%4==3,t43*=p,))}

{t=2*27*5*7*11*13*17*19*23;
 w=cribata(t,100);
 w1=matsize(w)[1];
 print (w1)}

 ferti(li,lf,a0,l0,l,i7,q,y,a)={a0=0;l0=matsize(v)[2];
 for(l=li,lf,print(l);for(i7=1,l0,q=v[i7]+t*l;y=fase(q);a=y[2];
 if(y[1]==0,print(q," falla"); write("salida23",q,"falla"),
 if(a>a0,a0=a; print(" ",q," ",a);
 write ("salida23"," ",q," ",a),))))}
\end{verbatim}

\section{Acknowledgment}
We wish to thank J\'onathan Heras Vicente,   Jos\'e Antonio  Mart\'{\i}nez Mu\~noz and Juan Luis Varona, who allowed us to use their computers to perform calculations.

\end{document}